\newtheorem{thm}{Theorem}[section]
\newtheorem{lemma}[thm]{Lemma}
\newtheorem{cor}[thm]{Corollary}
\theoremstyle{definition}
\newtheorem{remark}[thm]{Remark}
\theoremstyle{remark}
\newtheorem*{rmk}{Remark}
\newcommand{\cA}{\mathcal{A}}
\newcommand{\cF}{\mathcal{F}}
\newcommand{\cO}{\mathcal{O}}
\newcommand{\bA}{\mathbb{A}}
\newcommand{\bC}{\mathbb{C}}
\newcommand{\bG}{\mathbf{G}}
\newcommand{\bZ}{\mathbb{Z}}
\newcommand{\bP}{\mathbb{P}}
\newcommand{\bQ}{\mathbb{Q}}
\newcommand{\Pic}{\operatorname{Pic}}
\newcommand{\aff}{\operatorname{aff}}
\newcommand{\alb}{\operatorname{alb}}
\newcommand{\Aut}{\operatorname{Aut}}
\newcommand{\Ker}{\operatorname{Ker}}
\newcommand{\NE}{\overline{\operatorname{NE}}}
\newcommand{\NS}{\operatorname{NS}}
\newcommand{\rank}{\operatorname{rank}}
\newcommand{\Sing}{\operatorname{Sing}}
\newcommand{\Alb}{\operatorname{Alb}}
\newcommand{\isom}{\simeq}
\newcommand{\ratmap}{\dashrightarrow}
\begin{document}

\title[The dimension of automorphism groups of algebraic varieties]
{The dimension of automorphism groups of algebraic varieties with pseudo-effective log canonical divisors}

\author{Fei Hu}
\address{\textsc{Department of Mathematics} \endgraf \textsc{National University of Singapore} \endgraf \textsc{10 Lower Kent Ridge Road, Singapore 119076}}
\email{\href{mailto:hf@u.nus.edu}{\tt hf@u.nus.edu}}


\begin{abstract}
Let $(X,D)$ be a log smooth pair of dimension $n$, where $D$ is a reduced effective divisor such that the log canonical divisor $K_X + D$ is pseudo-effective.
Let $G$ be a connected algebraic subgroup of $\Aut(X,D)$.
We show that $G$ is a semi-abelian variety of dimension $\le \min\{n-\bar{\kappa}(V), n\}$ with $V\coloneqq X\setminus D$.
In the dimension two, Iitaka claimed in \cite{Iitaka79} that $\dim G\le \bar{q}(V)$ for a log smooth surface pair with $\bar{\kappa}(V) = 0$ and $\bar{p}_g(V) = 1$.
We (re)prove and generalize this classical result for all surfaces with $\bar{\kappa}=0$
without assuming Iitaka's classification of logarithmic Iitaka surfaces or logarithmic $K3$ surfaces.
\end{abstract}

\subjclass[2010]{
14J50, 
14L10, 
14L30. 
}

\keywords{automorphism, semi-abelian variety, group action, logarithmic Kodaira dimension}

\thanks{}

\maketitle


%
%
%
%

\section{Introduction} \label{section_intro}

\noindent
Throughout this paper, unless otherwise stated, we work over the field $\bC$ of complex numbers.
Let $V$ be an algebraic variety.
By Nagata, there is a complete algebraic variety $\overline{V}$ containing $V$ as a Zariski-dense open subvariety.
Then by Hironaka, there exist a {\it log smooth} pair $(X,D)$,
i.e., $X$ is a smooth projective variety and $D$ is a reduced effective divisor with only simple normal crossing (SNC) singularities,
and a projective birational morphism $\pi\colon X\to \overline{V}$ such that $D = \pi^{-1}(\overline{V}\setminus V)$ and $X\setminus D = \pi^{-1}(V)$.
Such a pair is called a {\it log smooth completion} of $V$ and $D$ is called the {\it boundary divisor}.
We then define
$$\text{the \it logarithmic irregularity } \bar{q}(V) \coloneqq h^0(X, \Omega_X^1(\log D)), $$
$$\text{the \it logarithmic geometric genus } \bar{p}_g(V) \coloneqq h^0(X, K_X + D), $$
$$\text{the \it logarithmic Kodaira dimension } \bar{\kappa}(V) \coloneqq \kappa(X, K_X + D), $$
where $\Omega_X^1(\log D)$ is the logarithmic differential sheaf,
$h^i(-)$ denotes the complex dimension of $H^i(-)$ and $\kappa$ denotes the Iitaka $D$-dimension.
It is known that these numerical invariants are independent of the choice of the log smooth completion $(X,D)$.
See \cite[\S 11]{Iitaka82} for details.

Let $G$ be a connected algebraic group.
By Chevalley's structure theorem on algebraic groups,
there exists a unique connected affine normal subgroup $G_{\rm aff}$ of $G$
such that the quotient group $A_G\coloneqq G/G_{\rm aff}$ is an abelian variety.
Moreover, the quotient morphism is the Albanese morphism $\alb_G$ of $G$.
If $G_{\rm aff}$ is an algebraic torus, denoted as $T_G$, then $G$ is called a {\it semi-abelian variety},
i.e., there is an exact sequence of connected algebraic groups:
\begin{equation} \label{eq_semi_abelian} \tag{$\spadesuit$}
1\longrightarrow T_G \longrightarrow G \xrightarrow{\alb_G} A_G \longrightarrow 1.
\end{equation}
It is known that such $G$ is a commutative algebraic group (see e.g. \cite[Proposition 3.1.1]{BSU13}).

Due to Serre \cite[Th\'eor\`emes 5 and 7]{Serre58},
there exist an abelian variety $\Alb(V)$ (resp. a semi-abelian variety $\cA_V$)
and a morphism $\alb_V \colon V \to \Alb(V)$ (resp. a morphism $\alpha_V\colon V \to \cA_V$)
such that any morphism from $V$ to an abelian variety (resp. a semi-abelian variety) factors,
uniquely up to translations, through this $\Alb(V)$ (resp. $\cA_V$).
Then $\Alb(V)$ (resp. $\alb_V$) is called the {\it Albanese variety} (resp. the {\it Albanese morphism}) of $V$,
and $\cA_V$ (resp. $\alpha_V$) is called the {\it quasi-Albanese variety} (resp. the {\it quasi-Albanese morphism}) of $V$.
Note, however, that this construction of the Albanese morphism is, in general, not of birational nature.
Alternatively, one can {\it birationally} define the Albanese variety and the Albanese map (which is only a rational map; cf.~\cite[Chapter II, \S3]{Lang83}).
See \cite[Th\'eor\`eme 6]{Serre58} for the relation between these two definitions.
From the viewpoint of birational geometry, they are the same in characteristic zero for normal projective varieties with only rational singularities (cf.~\cite[Lemma 8.1]{Kawamata85}).

Let $V$ be a smooth algebraic variety with some log smooth completion $(X,D)$
obtained by blowing up subvarieties of the boundary such that $V = X\setminus D$.
Then the Albanese varieties of $V$ and $X$ are isomorphic to each other and the Albanese morphism $\alb_V$ of $V$ is just the restriction of the Albanese morphism $\alb_X$ of $X$.
Also, the Albanese morphism $\alb_V$ of $V$ factors through the quasi-Albanese morphism $\alpha_V$ of $V$.
That is, we have the following commutative diagram:
\[\xymatrix{
\cA_V \ar@{->}[drr]_-{\alb_{\cA_V}} & & V \ar@{->}[d]^-{\alb_V} \ar@{->}[ll]_-{\alpha_V} \ar@{^{(}->}[r]^-{j} & X \ar@{->}[d]^-{\alb_X}  \\
& & \Alb(V) \ar@{->}[r]_-{\alb(j)}^-{\isom} & \Alb(X).
}\]
Further, the quasi-Albanese variety $\cA_V$ of $V$ can be constructed using the space of logarithmic $1$-forms $H^0(X, \Omega_X^1(\log D))$.
See \cite{Fujino15, Iitaka76} for more details about this construction, which depends on Deligne's mixed Hodge theory for smooth complex algebraic varieties (unlike Serre's construction \cite{Serre58}, which is valid over an algebraically closed field of arbitrary characteristic).
It is known that $\dim \cA_V = \bar q(X)$ and $\dim \Alb(V) = q(X)\coloneqq h^1(X, \cO_X)$.
If we assume further that $V$ is projective, then the quasi-Albanese morphism $\alpha_V$ of $V$ is just the original Albanese morphism $\alb_V$ of $V$.

We shall refer to \cite{KM98} for the standard definitions, notations, and terminologies in birational geometry.
For instance, see \cite[Definitions 2.34, 2.37, and 5.8]{KM98} for the definitions of {\it canonical} singularity,
Kawamata log terminal singularity ({\it klt}), divisorial log terminal singularity ({\it dlt}), log canonical singularity ({\it lc}), and {\it rational singularity}.

\begin{thm} \label{ThmA}
Let $(X,D)$ be a projective $\bQ$-factorial dlt pair of dimension $n$
and $D$ a reduced effective divisor such that $K_X + D$ is pseudo-effective.
Let $\Aut(X,D)$ denote the stabilizer of the boundary $D$ (viewed as a subset of $X$) in the automorphism group $\Aut(X)$ of $X$.
Let $G$ be a connected algebraic subgroup of $\Aut(X,D)$.
Then the following assertions hold.
\begin{enumerate}[{\em (1)}]
  \item \label{ThmA_1} $G$ is a semi-abelian variety sitting in the exact sequence \eqref{eq_semi_abelian} of dimension at most
$$\min \! \left\{ n - \kappa(X, K_X + D), n \right\} \! .$$
  \item \label{ThmA_2} When $\dim G = n$, $X$ is a $G$-equivariant compactification of $G$ such that $K_X + D \sim 0$.
  \item  Suppose further that $\kappa(X, K_X + D)\ge 0$. Then we have
\begin{enumerate}[{\em (a)}]
  \item \label{ThmA_3a} $\dim G\le n$ and the equality holds only if $\kappa(X, K_X + D) = 0$ and the dimension of the abelian variety $A_G$ equals $q(X)$;
  \item \label{ThmA_3b} $\dim T_G\le n$ and the equality holds only if $\kappa(X, K_X + D) = 0$ and $\dim A_G = q(X) = 0$.
\end{enumerate}
\end{enumerate}
\end{thm}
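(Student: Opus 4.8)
The plan is to reduce to the log smooth case and then read off the group structure and the dimension bounds from the positivity of $K_X+D$, with Iitaka's easy addition theorem doing the main numerical work. Taking a $G$-equivariant log resolution of the dlt pair $(X,D)$, I may assume $(X,D)$ is log smooth: since the dlt discrepancies are $\ge-1$, the invariant $\kappa(X,K_X+D)=\bar{\kappa}(V)$ and the pseudo-effectivity of $K_X+D$ are unchanged, and $\dim G$ is preserved under equivariant birational modifications. Because $G$ is connected, it acts trivially on $\NS(X)$ and fixes the class of $K_X+D$; in particular every linear system $|m(K_X+D)|$ and every log-pluricanonical map is $G$-equivariant.

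The first genuine step is to prove that $G$ is semi-abelian, i.e. that its affine part $G_{\rm aff}$ is a torus. As any connected affine group that is not a torus contains a copy of $\mathbb{G}_{a}$, it suffices to rule out $\mathbb{G}_{a}\subseteq G$. The inclusion $\mathrm{Lie}(G)\hookrightarrow H^0(X,T_X(-\log D))$ shows that such a subgroup is generated by a nonzero logarithmic vector field $\partial$ tangent to $D$, whose general orbit is a copy of $\bA^1$ inside $V=X\setminus D$; these orbits sweep out a dense subset of $X$, exhibiting $(X,D)$ as log-uniruled and producing a covering family of rational curves $C$ with $(K_X+D)\cdot C<0$. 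Since a pseudo-effective class pairs non-negatively with a movable curve, this contradicts the pseudo-effectivity of $K_X+D$. I expect \emph{this} step to be the main obstacle: the delicate point is to verify $(K_X+D)\cdot C<0$ for the orbit closures, for which one must exploit that $\partial$ is tangent to $D$ (so that $D$ meets $C$ only at the $\mathbb{G}_{a}$-fixed limit point) together with the freeness of a general $C$; equivalently, one invokes the logarithmic analogue of the Boucksom--Demailly--P\u{a}un--Peternell characterization of pseudo-effective log canonical divisors. Once $\mathbb{G}_{a}$ is excluded, $G_{\rm aff}=T_G$ is a torus and \eqref{eq_semi_abelian} holds.

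For the dimension bound in \eqref{ThmA_1} I would pass to a general $G$-orbit. A faithful action of the commutative group $G$ has finite generic stabilizer, so a general orbit is isomorphic to $G$ and lies in $V$; being a semi-abelian variety it has $\bar{\kappa}=0$. Taking the rational quotient $V\ratmap W$ by the $G$-action, whose general fibre is such an orbit, Iitaka's easy addition gives
\begin{equation*}
\bar{\kappa}(V)\le \dim W+\bar{\kappa}(G)=(n-\dim G)+0,
\end{equation*}
whence $\dim G\le n-\kappa(X,K_X+D)$; together with the trivial bound $\dim G\le n$ (a general orbit embeds in $X$) this yields $\dim G\le\min\{n-\kappa(X,K_X+D),\,n\}$. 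For \eqref{ThmA_2}, equality $\dim G=n$ forces the general orbit to be dense, so $X$ is a $G$-equivariant compactification of $G$ with boundary $\partial\coloneqq X\setminus G$; the standard structure of such compactifications gives $K_X+\partial\sim0$, and since $D\subseteq\partial$ the class $K_X+D=-(\partial-D)$ is anti-effective, so pseudo-effectivity forces $\partial=D$ and hence $K_X+D\sim0$.

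Finally, part \eqref{ThmA_3a}--\eqref{ThmA_3b} is formal once \eqref{ThmA_1} and \eqref{ThmA_2} are established. Assuming $\kappa\coloneqq\kappa(X,K_X+D)\ge0$, the bound of \eqref{ThmA_1} reads $\dim G\le n-\kappa\le n$; if equality $\dim G=n$ holds then $\kappa=0$, and by \eqref{ThmA_2} the variety $X$ is an equivariant compactification of $G$, so $\Alb(X)=\Alb(\bar G)=A_G$ and therefore $\dim A_G=q(X)$, proving statement (a). For statement (b), from $\dim T_G\le\dim G\le n$ any equality $\dim T_G=n$ propagates to $\dim G=n$, whence $\kappa=0$ by (a) and $\dim A_G=\dim G-\dim T_G=0$, giving $q(X)=\dim A_G=0$.
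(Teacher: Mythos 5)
Your overall strategy for parts (1) and (2) matches the paper's: reduce to a log smooth pair by a $G$-equivariant log resolution, rule out $\bG_a\subseteq G$ by producing a covering family of $\bA^1$-orbit closures contradicting pseudo-effectivity of $K_X+D$, bound $\dim G$ via a general orbit and Iitaka's easy addition applied to the Rosenlicht quotient, and in the maximal case invoke the Brion--Zhang triviality $K_X+D'\sim 0$ for the equivariant compactification and compare $D'$ with $D$. For (3a) you take a genuinely different and shorter route: once $X$ is an equivariant compactification of $G$ you read off $q(X)=\dim\Alb(X)=\dim\Alb(G)=\dim A_G$, whereas the paper runs a chain of inequalities through the quasi-Albanese morphism ($\dim A_G\le q(X)$ by Nishi--Matsumura, $\dim T_G/(T_G\cap K_\alpha)\le \dim T_{\cA_V}$, and $\dim (T_G\cap K_\alpha)\le n-\bar{q}(V)$) and forces them all to be equalities. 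Your version is cleaner and appears correct (morphisms from the smooth dense orbit to an abelian variety extend over $X$, and the Albanese of a semi-abelian variety is its abelian quotient); you should still record, as the paper does, that $q$ of the original dlt $X$ agrees with $q$ of the resolution because dlt singularities are rational.

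There are, however, two gaps. The substantive one: you justify ``a general orbit is isomorphic to $G$'' by asserting that a faithful action of a \emph{commutative} group has finite generic stabilizer. That is false in general: $\bG_a^2$ acts faithfully on $\bA^3$ by $(s,t)\cdot(x,y,z)=(x,\,y+sx+tz,\,z)$, yet every point with $(x,z)\ne(0,0)$ has a one-dimensional stabilizer. What saves the day is precisely the semi-abelian structure established in the previous step: the paper first gets generic freeness of the torus $T_G$ from Demazure, then notes that any isotropy group $G_x$ is affine (it has a fixed point), so its neutral component lies in $(T_G)_x$ and is generically trivial; alternatively one can use that a semi-abelian variety has only countably many algebraic subgroups. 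This step is load-bearing both for $\dim G\le n$ and for the quotient-dimension count, so it must be supplied. The second, smaller issue: your claim that the orbit closures $C$ satisfy $(K_X+D)\cdot C<0$ is not justified --- $C$ meets $D$ set-theoretically in at most one point, but the local intersection multiplicity there can be large, so $D\cdot C$ need not be small and the sign of $(K_X+D)\cdot C$ is unclear. The paper sidesteps this by citing exactly the statement it needs (\cite[Lemma 2.1]{BZ15}: a variety covered by rational curves meeting $D$ in at most one point has $K_X+D$ not pseudo-effective); an appeal to a ``logarithmic BDPP'' is not an established substitute. You correctly flag this as the delicate point, but as written it remains a gap.
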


A {\it logarithmic Iitaka surface} is a smooth algebraic surface $V$ such that the logarithmic Kodaira dimension $\bar{\kappa}(V) = 0$
and the logarithmic geometric genus $\bar{p}_g(V) = 1$.
In this case by Kawamata \cite[Corollary 29]{Kawamata81}, we know that the logarithmic irregularity $\bar{q}(V)\le \dim V = 2$.
If assume further that $\bar{q}(V) = 0$, we then call $V$ a {\it logarithmic $K3$ surface}. See \cite{Iitaka79} for details.

Next, we (re)prove and generalize \cite[Theorem 5]{Iitaka79} in which
Iitaka provided an upper bound of the dimension of automorphism groups of certain logarithmic Iitaka surfaces.
However, his (implicit) proof depends heavily on his classification of logarithmic Iitaka surfaces and logarithmic $K3$ surfaces, so that we are not able to follow his proof completely.
Here we offer a classification-free proof for all smooth surfaces with vanishing logarithmic Kodaira dimension.

\begin{thm} \label{ThmD}
Let $(X,D)$ be a log smooth pair of dimension $2$ with $V\coloneqq X\setminus D$ such that $\bar{\kappa}(V) = 0$.
Let $G$ be a connected algebraic subgroup of $\Aut(X,D)$.
Then $G$ is a semi-abelian variety of dimension at most $\bar{q}(V)$.
If assume further that $\bar{p}_g(V) = 0$, then $\dim G\le q(X)$.
\end{thm}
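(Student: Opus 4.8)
My plan is to deduce Theorem~\ref{ThmD} from Theorem~\ref{ThmA} together with the functoriality of the quasi-Albanese map. As Step~1 I would check the hypotheses of Theorem~\ref{ThmA}: a log smooth pair is in particular projective and $\bQ$-factorial dlt, and since $\bar{\kappa}(V)=\kappa(X,K_X+D)=0\ge 0$ the log canonical divisor $K_X+D$ is pseudo-effective. Thus Theorem~\ref{ThmA} applies with $n=2$ and gives at once that $G$ is a semi-abelian variety with $\dim G\le\min\{2-0,2\}=2$. The borderline case $\dim G=2$ is then immediate: by Theorem~\ref{ThmA}(2) the variety $X$ is a $G$-equivariant compactification of the semi-abelian surface $G$ with $K_X+D\sim 0$, so $V=X\setminus D$ contains the dense orbit $G$ and is birational to it; as $\bar{q}$ and $\cA_V$ are birational invariants, $\cA_V\isom G$ and $\bar{q}(V)=\dim\cA_V=2=\dim G$ (note also $\bar{p}_g(V)=h^0(K_X+D)=1$ here, so this case is excluded once $\bar{p}_g(V)=0$). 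It therefore remains to treat $\dim G\le 1$ and to establish the sharper bound $\dim G\le\bar{q}(V)$.

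As Step~2 I would record the $G$-equivariance of $\alpha_V\colon V\to\cA_V$. For fixed $v$ the morphism $g\mapsto\alpha_V(g\cdot v)-\alpha_V(v)$ sends the identity to $0$, so by rigidity of semi-abelian varieties (a morphism between semi-abelian varieties preserving the identity is a homomorphism) it is a homomorphism $\chi_v\colon G\to\cA_V$; since $\Hom(G,\cA_V)$ is discrete and $V$ is connected, $\chi_v=\chi$ is independent of $v$, giving $\alpha_V(g\cdot v)=\chi(g)+\alpha_V(v)$. Hence $\chi(G)\subseteq\cA_V$ is a semi-abelian subvariety and $\dim G=\dim\chi(G)+\dim N$, where $N\coloneqq(\ker\chi)^{0}$ is the connected subgroup acting along the fibres of $\alpha_V$. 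Since $\dim\chi(G)\le\dim\cA_V=\bar{q}(V)$, the whole bound $\dim G\le\bar{q}(V)$ reduces to showing that the vertical part $N$ is trivial, i.e.\ that $\chi$ has finite kernel.

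Proving $N=0$ is the main obstacle, and this is exactly where $\bar{\kappa}(V)=0$ must be used decisively. If $N$ were positive-dimensional, a general orbit $O=N\cdot v_0$ would be a positive-dimensional semi-abelian variety contained in a fibre of $\alpha_V$, hence (as $\dim V=2$) a curve isomorphic to $\mathbf{G}_m$ or to an elliptic curve, and these orbits would sweep out $V$ in a one-parameter family, exhibiting $V$ birationally as a fibre space $V\ratmap B$ over the curve $B=V/N$ whose general fibre is such a one-dimensional semi-abelian variety. I would then invoke Kawamata's description of the quasi-Albanese map for varieties with $\bar{\kappa}=0$ (\cite{Kawamata81}): $\alpha_V$ is dominant and its general fibre $F$ again satisfies $\bar{\kappa}(F)=0$. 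The decisive point is that the connected group $N$ acts on the fibres by translations, so the monodromy acts trivially on the one-dimensional space of fibrewise invariant log $1$-forms; this fibrewise form is therefore monodromy-invariant and, because $\bar{\kappa}(V)=0$ forbids the positivity that would obstruct its lifting, extends to a global log $1$-form on $V$. That global form pairs nontrivially with $\mathrm{Lie}(N)$, contradicting $O\subseteq$(fibre of $\alpha_V$); hence $N=0$ and $\dim G=\dim\chi(G)\le\bar{q}(V)$. The careful part is precisely this extension/monodromy computation showing the vertical form survives on $V$; Kawamata's bound $\bar{q}(V)\le\dim V$ for $\bar{\kappa}(V)=0$ (\cite[Corollary 29]{Kawamata81}) supplies the complementary count that pins everything down.

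For the refinement, suppose in addition $\bar{p}_g(V)=0$. Writing $\mathfrak{g}=\mathrm{Lie}(G)\subseteq H^0(X,T_X(-\log D))$ and using that every global log $1$-form is closed (Deligne), each $\omega\in H^0(X,\Omega_X^1(\log D))$ satisfies $L_v\omega=\der(\iota_v\omega)=0$ for $v\in\mathfrak{g}$, so these forms are $G$-invariant; by Step~2 they are the pullbacks $\alpha_V^{*}\eta$ of the invariant $1$-forms on $\cA_V$, and contraction identifies $\mathfrak{g}$ with a subspace of $H^0(X,\Omega_X^1(\log D))^{*}$. If the toric part $T_G$ were nontrivial, a $\mathbf{G}_m\subseteq T_G$ would supply via $\alpha_V$ a nonzero invariant log $1$-form $\omega_1$ (the angular/residue form), and the structure of $\alpha_V$ for $\bar{\kappa}(V)=0$ from Step~3 would supply a complementary invariant log $1$-form $\omega_2$; their wedge $\omega_1\wedge\omega_2$ is a nonzero $G$-invariant log $2$-form, i.e.\ a nonzero section of $K_X+D=\det\Omega_X^1(\log D)$, forcing $\bar{p}_g(V)\ge 1$. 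Hence $\bar{p}_g(V)=0$ makes $T_G=0$, so $G=A_G$ is an abelian variety; since an abelian variety admits no nonconstant map to the affine part of $\cA_V$, the finite-kernel statement of Step~3 embeds $A_G$ into $\Alb(V)$, whence $\dim G=\dim A_G\le\dim\Alb(V)=q(X)$. The delicate point is again verticality, namely the nonvanishing of the wedge, which I would settle by applying the mechanism of Step~3 to the toric direction.
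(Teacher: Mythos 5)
Your Step 1 (reduction to Theorem~\ref{ThmA}, semi-abelianness, $\dim G\le 2$, and the case $\dim G=2$) and Step~2 (equivariance $\alpha_V(g\cdot v)=\chi(g)+\alpha_V(v)$ for a homomorphism $\chi\colon G\to\cA_V$) are sound. But the heart of your argument --- Step~3, proving that $\chi$ has finite kernel so that $G$ embeds into $\cA_V$ up to isogeny --- is precisely the approach the paper's introductory remark on Brion's example (the remark following Theorem~\ref{ThmD}, labelled {\tt rmk\_Brion}) is designed to rule out. There $X=\bP^2$, $D$ is a smooth conic plus a transversal line, $V=X\setminus D$ satisfies $\bar{\kappa}(V)=0$, $\bar{p}_g(V)=1$, $\bar{q}(V)=1$, and $G=\Aut^0(X,D)\isom\bG_m$ acts on $V\subset\bA^2$ by $t\cdot(x,y)=(tx,t^{-1}y)$, while $\alpha_V$ is essentially the $G$-invariant function $xy-1$. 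Hence $\chi$ is trivial and $N=(\Ker\chi)^0=G$ is one-dimensional: the bound $\dim G\le\bar{q}(V)$ holds, but not because $N=0$. Your proposed monodromy/extension mechanism fails concretely in this example: the fibrewise invariant log $1$-form is $dx/x-dy/y$, whose poles lie along the coordinate axes, which sit \emph{inside} $V$ rather than in $D$, so it does not extend to an element of $H^0(X,\Omega_X^1(\log D))$. The same trivial-$\chi$ degeneration undercuts your $\bar{p}_g=0$ refinement: the form $\omega_1=\alpha_V^*\eta$ attached to the toric direction need not exist, and the paper does not in fact prove $T_G=0$ when $\bar{p}_g=0$ --- in Theorem~\ref{ThmC} the case $G=\bG_m$ survives and is handled by showing (Lemma~\ref{p_g_bar_zero}, via the Orlik--Wagreich structure theory of $\bG_m$-surfaces) that $X$ must then be an elliptic ruled surface, whence $q(X)=1\ge\dim G$.

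For contrast, the paper's proof avoids the quasi-Albanese map for the vertical and toric parts altogether. It splits into $\bar{p}_g=1$ (Theorem~\ref{ThmB}) and $\bar{p}_g=0$ (Theorem~\ref{ThmC}); in the first case it classifies the possible ambient surfaces, extracts a distinguished subdivisor $D_A\le D$, runs a $G$-equivariant MMP to a model with $K_{X_m}+D_{A,m}\sim 0$, and then exploits the self-duality $\Theta_X(-\log D)\isom\Omega_X^1(\log D)$ in dimension $2$ with trivial log canonical class (Lemma~\ref{aut_log_CY}) to get $\dim\Aut^0(X_m,D_{A,m})=\bar{q}$ exactly. To salvage your strategy you would need an independent bound on $\dim N$ when $\chi$ degenerates, and that is essentially what forces the paper's case analysis.
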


\begin{rmk} \label{rmk_Brion}
It is known that for an abelian variety $A$ acting faithfully on a smooth algebraic variety $X$,
the induced group homomorphism $A\to \Alb(X)$ has a finite kernel by the Nishi--Matsumura theorem (cf.~\cite{Matsumura63}).
In particular, we have $\dim A\le \dim \Alb(X)=q(X)$.
However, for a semi-abelian variety $G$ acting faithfully on a smooth algebraic variety $V$,
by Brion's example\footnote{
The author is grateful to Professor Michel Brion for a conversation about his (counter)example.}
below one cannot try to prove $G\to \cA_V$ has a finite kernel and to deduce $\dim G\le \bar{q}(V)$.

Let $X$ be the projective plane $\bP^2$ and $D$ the union of a smooth conic and a transversal line.
In homogeneous coordinates, one can take for $D$ the union of $(xy = z^2)$ and $(z = 0)$.
Then the neutral component of the automorphism group of $(X,D)$ is a one-dimensional algebraic torus,
acting via $t\cdot [x:y:z] = [tx:t^{-1}y:z]$.
Also, $V\coloneqq X \setminus D$ is the complement of the conic $(xy = 1)$ in the affine plane $\bA^2$ with coordinates $x,y$.
So the quasi-Albanese variety $\cA_V$ of $V$ is a one-dimensional algebraic torus too,
and the quasi-Albanese morphism $\alpha_V$ is just given by $xy - 1$
(which generates the group of all invertible regular functions on $V$ modulo constants).
Then $\alpha_V$ is $G$-invariant and hence $G$ does not act on $\cA_V$ with a finite kernel.
\end{rmk}

The following two corollaries are direct consequences of our main theorems, Sumihiro's equivariant completion theorem (cf.~\cite[Theorem 3]{Sumihiro74}), and the equivariant resolution theorem (see \cite[Proposition 3.9.1 and Theorem 3.36]{Kollar07} for a modern description).
Indeed, let $V$ be a normal algebraic variety and $G$ a {\it linear} algebraic subgroup of $\Aut(V)$.
Sumihiro's theorem asserts that there exists a $G$-equivariant completion $\overline{V}$ of $V$.
Let $(X,D)$ be a $G$-equivariant resolution of singularities of $\overline{V}$.
Thus we may identify $G$ with a subgroup of $\Aut(X,D)$ so that our main theorems apply.

\begin{cor} \label{CorA}
Let $V$ be a normal algebraic variety of logarithmic Kodaira dimension $\bar{\kappa}(V)\ge 0$,
and $G$ a connected linear algebraic subgroup of $\Aut(V)$.
Then $G$ is an algebraic torus of dimension at most $\min\{\dim V-\bar{\kappa}(V), \dim V\}$.
\end{cor}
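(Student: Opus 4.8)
The plan is to reduce everything to Theorem \ref{ThmA} by passing from the possibly singular, possibly non-complete variety $V$ to a log smooth completion on which $G$ still acts. First I would invoke Sumihiro's equivariant completion theorem to embed $V$ as a $G$-stable dense open subvariety of a normal projective $G$-variety $\overline{V}$, and then apply equivariant resolution of singularities (available in characteristic zero) to produce a log smooth pair $(X,D)$ together with a projective birational $G$-morphism $X\to\overline{V}$ with $D$ the reduced total transform of $\overline{V}\setminus V$ and $V=X\setminus D$. By construction the $G$-action on $V$ extends to $X$ and preserves $D$, so $G$ is realized as a connected algebraic subgroup of $\Aut(X,D)$; the realization is faithful because $V$ is dense in $X$.

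Next I would check that $(X,D)$ satisfies the hypotheses of Theorem \ref{ThmA}. Since $X$ is smooth it is $\bQ$-factorial, and a log smooth (SNC) pair is automatically dlt, so $(X,D)$ is a projective $\bQ$-factorial dlt pair of dimension $n\coloneqq\dim X=\dim V$ with $D$ reduced and effective. By the invariance of the logarithmic invariants under the choice of log smooth completion, $\kappa(X,K_X+D)=\bar{\kappa}(V)$, which is $\ge 0$ by assumption; hence some positive multiple of $K_X+D$ is effective and in particular $K_X+D$ is pseudo-effective. Theorem \ref{ThmA}\eqref{ThmA_1} then applies and shows that $G$ is a semi-abelian variety of dimension at most $\min\{n-\bar{\kappa}(V),n\}=\min\{\dim V-\bar{\kappa}(V),\dim V\}$.

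It remains to upgrade ``semi-abelian'' to ``torus'', and this is where the linearity hypothesis on $G$ is used. Writing $G$ in its defining sequence \eqref{eq_semi_abelian}, the Albanese quotient $A_G=G/T_G$ is an abelian variety; but a quotient of an affine (linear) algebraic group by a closed normal subgroup is again affine, so $A_G$ is simultaneously affine and projective, hence finite, hence trivial since it is connected. Therefore $G=T_G$ is an algebraic torus, and the dimension bound obtained above is exactly the asserted one.

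I expect the only genuinely substantive input to be Theorem \ref{ThmA} itself; within this corollary the work is essentially bookkeeping---verifying that Sumihiro's theorem and equivariant resolution deliver a pair meeting the dlt and pseudo-effectivity hypotheses without altering $\dim V$ or $\bar{\kappa}(V)$---together with the elementary observation that a linear semi-abelian variety has trivial abelian part. The one point deserving care is the faithfulness and well-definedness of the extended action: one must ensure the equivariant completion and resolution can be chosen so that all of $G$ acts on $X$ with $D$ invariant, which is precisely what the equivariant versions of these theorems guarantee.
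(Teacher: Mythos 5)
Your proposal is correct and follows essentially the same route as the paper: Sumihiro's equivariant completion plus equivariant resolution to realize $G$ inside $\Aut(X,D)$ for a log smooth (hence $\bQ$-factorial dlt) pair with $K_X+D$ pseudo-effective (from $\bar{\kappa}(V)\ge 0$), then Theorem \ref{ThmA}\eqref{ThmA_1} for the semi-abelian structure and dimension bound, with linearity forcing the abelian quotient $A_G$ to be trivial so that $G=T_G$ is a torus. The only cosmetic caveat is that for singular $V$ one has $X\setminus D=\pi^{-1}(V)$ rather than $V$ itself, but this does not affect faithfulness or the invariants.
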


\begin{cor} \label{CorB}
Let $V$ be a smooth algebraic surface, and $(X,D)$ a log smooth completion such that $V = X\setminus D$.
Suppose that $\bar{\kappa}(V) = 0$ and let $G$ be a connected linear algebraic subgroup of $\Aut(V)$.
Then $G$ is an algebraic torus of dimension at most $\bar{q}(X)$.
If assume further that $\bar{p}_g(V) = 0$, then $\dim G\le q(X)$.
\end{cor}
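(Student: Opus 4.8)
The plan is to reduce the bound on $\dim G$ to counting global logarithmic vector fields and to exploit the rank-two self-duality of the logarithmic cotangent bundle on a surface. Since $(X,D)$ is log smooth it is $\bQ$-factorial dlt, and $\bar{\kappa}(V)=0$ means that $K_X+D$ is pseudo-effective with $\kappa(X,K_X+D)=0$; hence Theorem~\ref{ThmA} applies with $n=2$ and shows that $G$ is a semi-abelian variety with $\dim G\le 2$. As $G$ is connected and stabilises $D$, its infinitesimal generators are vector fields tangent to $D$, i.e.\ sections of the logarithmic tangent sheaf, so that $\dim G=\dim\operatorname{Lie}(G)\le h^0(X,T_X(-\log D))$. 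On the surface $X$ the sheaf $\Omega_X^1(\log D)$ is locally free of rank two with $\det\Omega_X^1(\log D)=\cO_X(K_X+D)$, whence the self-duality
\[
T_X(-\log D)=\bigl(\Omega_X^1(\log D)\bigr)^{\vee}\isom\Omega_X^1(\log D)\otimes\cO_X(-(K_X+D)).
\]

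First I would dispose of the case $\bar{p}_g(V)\ge 1$, which since $\bar{\kappa}(V)=0$ means exactly $\bar{p}_g(V)=1$. Choosing an effective divisor $\Delta\in|K_X+D|$ and tensoring the inclusion $\cO_X(-\Delta)\hookrightarrow\cO_X$ with the locally free sheaf $\Omega_X^1(\log D)$ produces an injection $T_X(-\log D)\isom\Omega_X^1(\log D)(-\Delta)\hookrightarrow\Omega_X^1(\log D)$. Taking global sections gives $\dim G\le h^0(X,T_X(-\log D))\le h^0(X,\Omega_X^1(\log D))=\bar{q}(V)$, which proves the first assertion in this case. Note that the extremal case $\dim G=2$ is automatically of this type: by Theorem~\ref{ThmA} it forces $K_X+D\sim 0$, so $\bar{p}_g(V)=1$.

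The substantive case is $\bar{p}_g(V)=0$ with $\bar{\kappa}(V)=0$, where $K_X+D$ has no section and the inclusion above is unavailable; here one must establish the sharper bound $\dim G\le q(X)$, which also yields $\dim G\le\bar{q}(V)$ since $\Omega_X^1\hookrightarrow\Omega_X^1(\log D)$ gives $q(X)\le\bar{q}(V)$. Because $\dim G=2$ would force $K_X+D\sim 0$ by Theorem~\ref{ThmA}, contradicting $\bar{p}_g(V)=0$, we have $\dim G\le 1$ and may assume $\dim G=1$. A one-dimensional semi-abelian variety is either an elliptic curve or $\bG_m$. If $G$ is an elliptic curve, then the induced homomorphism $G\to\Alb(X)$ has finite kernel by the Nishi--Matsumura theorem (Remark~\ref{rmk_Brion}), so $q(X)\ge 1=\dim G$ and we are done.

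The main obstacle is the torus sub-case $G=\bG_m$, which I expect to require the only global geometric input. Here the plan is to study the one-dimensional orbits: after a $\bG_m$-equivariant modification (which alters neither the birational invariant $q(X)$ nor the invariants $\bar{\kappa},\bar{p}_g,\bar{q}$ of $V$) the rational quotient becomes a $\bG_m$-invariant morphism $f\colon X\to B$ onto a smooth projective curve, whose general fibre is a $\bP^1$ on which $\bG_m$ fixes exactly two points. Logarithmic easy addition $\bar{\kappa}(V)\le\bar{\kappa}(\text{general open fibre})+\dim B$ forces the general open fibre to have non-negative logarithmic Kodaira dimension, hence to be $\bC^\ast$; equivalently $D$ contains both fixed sections $S_0$ and $S_\infty$. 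If $g(B)\ge 1$, then $f$ induces a surjection $\Alb(X)\twoheadrightarrow\operatorname{Jac}(B)$, so $q(X)\ge g(B)\ge 1=\dim G$, as required. If $g(B)=0$, then $X$ is rational; since $(K_X+D)\cdot F=\deg(K_{\bP^1}+2[\mathrm{pt}])=0$ on a general fibre $F$, the canonical bundle formula gives $K_X+D\sim_{\bQ}f^{\ast}L$ for a $\bQ$-divisor $L$ on $\bP^1$ with $\kappa(\bP^1,L)=\bar{\kappa}(V)=0$, forcing $L\sim_{\bQ}0$ and hence $K_X+D\sim_{\bQ}0$. (Concretely, on a relatively minimal model $\bF_e$ with $D=S_0+S_\infty+kF$ one computes $K_X+D\sim(k-2)F$, and $\bar{\kappa}(V)=0$ forces $k=2$.) As the Picard group of a rational surface is torsion-free, this upgrades to $K_X+D\sim 0$, whence $\bar{p}_g(V)=h^0(X,\cO_X)=1$, contradicting $\bar{p}_g(V)=0$. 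Therefore $G=\bG_m$ cannot occur when $g(B)=0$, and the bound $\dim G\le q(X)$ holds in every case, completing the proof.
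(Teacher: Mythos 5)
Your reduction of the case $\bar{p}_g(V)\ge 1$ to the inclusion $T_X(-\log D)\isom\Omega_X^1(\log D)\otimes\cO_X(-\Delta)\injmap\Omega_X^1(\log D)$ for an effective $\Delta\in|K_X+D|$ is correct and is genuinely slicker than the paper's route (the paper only exploits this rank-two self-duality when $K_X+D\sim 0$, in Lemma~\ref{aut_log_CY}, and otherwise must classify the ambient surface and run a contraction process on $(X,D_A)$ to reach that situation). Two points, however, need attention. First, $G$ is only given as a subgroup of $\Aut(V)$, not of $\Aut(X,D)$, so before invoking Theorem~\ref{ThmA} or speaking of vector fields tangent to $D$ you must replace $(X,D)$ by a $G$-equivariant log smooth completion (Sumihiro's theorem plus equivariant resolution, as in the paper's derivation of the corollaries); this is harmless because $\bar q$, $\bar p_g$, $\bar\kappa$ and $q$ are independent of the completion, but it cannot be skipped.

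Second, and more seriously, the decisive step in the case $G=\bG_m$, $g(B)=0$ is not justified. From $(K_X+D)\cdot F=0$ you assert that $K_X+D\sim_{\bQ}f^{*}L$, but a class of degree zero on the general fibre of a fibration over a curve need not be a pullback: since $\bar\kappa(V)=0$ there is an effective $\bQ$-divisor $\Delta\sim_{\bQ}K_X+D$, and the intersection number only forces $\Delta$ to be vertical; a priori $\Delta$ could be a proper part of a reducible fibre, which has $\kappa(X,\Delta)=0$ without being $\sim_{\bQ}0$. Excluding this requires a contraction/abundance argument of exactly the kind that occupies the paper's Lemma~\ref{lemma_abundance} and Lemma~\ref{p_g_bar_zero}. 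Your parenthetical computation on a relatively minimal model $\bF_e$ does not substitute for it: $(\bF_e,\,S_0+S_\infty+kF)$ need not be a log smooth completion of $V$ (the contracted vertical $(-1)$-curves may meet $V$, and the pushed-forward boundary may fail to be SNC), and since the discrepancy correction when pulling $K_{\bF_e}+D_m$ back to $X$ is not effective in general, $K_{\bF_e}+D_m\sim 0$ does not yield $h^0(X,K_X+D)\ge 1$. The paper instead maps equivariantly to $\bP^1\times\bP^1$ with a toric-type boundary and derives the contradiction from the computation of $\bar q$ (Lemma~\ref{Iitaka_lemma_q_bar}) combined with Kawamata's characterization of surfaces with $\bar q=2$ and $\bar\kappa=0$. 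Until this step is filled in, the bound $\dim G\le q(X)$ in the case $\bar p_g(V)=0$ is not established.
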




%
%
%
%

\section{Proof of Theorem \ref{ThmA}} \label{section_proof_1}

We first prove that $G$ is a semi-abelian variety (see \eqref{eq_semi_abelian} for its definition and related notations),
under a slightly weaker condition than that of Theorem \ref{ThmA}.

We remark that $G$ is a semi-abelian variety if and only if $G$ does not contain any algebraic subgroup isomorphic to the one-dimensional additive algebraic group $\bG_a$.
In fact, by Chevalley's structure theorem, to show $G$ is a semi-abelian variety, it suffices to show that the affine normal subgroup $G_{\aff}$ of $G$ is an algebraic torus.
Consider the unipotent radical $R_u(G_{\aff})$ of $G_{\aff}$.
If it is not trivial, then it contains $\bG_a$. So we may assume that $G_{\aff}$ is reductive.
Note that any non-trivial semi-simple subgroup of $G_{\aff}$ also contains $\bG_a$.
Thus by the structure theory of reductive groups, $G_{\aff} = R(G_{\aff})$ is an algebraic torus.

\begin{lemma} \label{lemma_semi_abelian}
Let $(X,D)$ be a projective log canonical pair and $D$ a reduced effective divisor such that $K_X + D$ is pseudo-effective.
Let $G$ be a connected algebraic subgroup of $\Aut(X,D)$.
Then $G$ is a semi-abelian variety.
\end{lemma}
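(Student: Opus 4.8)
The plan is to argue by contradiction using the reduction recorded just above: since $G$ fails to be semi-abelian precisely when it contains a copy of $\bG_a$, it suffices to show that no one-parameter unipotent subgroup can act on $(X,D)$ when $K_X+D$ is pseudo-effective. So suppose $\bG_a\subseteq\Aut(X,D)$ acts nontrivially. First I would reduce to the log smooth case: choosing a $\bG_a$-equivariant log resolution $f\colon(X',D')\to(X,D)$ with $(X',D')$ log smooth and $D'=f^{-1}(D)$, the log canonical hypothesis gives $K_{X'}+D'=f^*(K_X+D)+\sum_i(a_i+1)E_i$ with every $a_i\ge-1$, so the correction term is effective and $K_{X'}+D'$ remains pseudo-effective; since $f$ is equivariant and an isomorphism over $V$, the action of $\bG_a$ lifts faithfully to $(X',D')$. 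Thus I may assume from the start that $(X,D)$ is log smooth, which makes the logarithmic tangent sheaf $T_X(-\log D)$ a genuine vector bundle.

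The next step is to produce the curves that will detect non-pseudo-effectivity. In characteristic zero a nontrivial $\bG_a$-action has trivial generic stabilizer, so its general orbit is isomorphic to $\bA^1$ and, since $\bG_a$ preserves $V=X\setminus D$, a general orbit lies in $V$. Let $\bar C$ be the closure of such a general orbit and $\nu\colon\bP^1\to X$ its normalization; then $\nu$ is an isomorphism onto the orbit, the added point $\nu(\infty)$ is a $\bG_a$-fixed point, and $\nu^{-1}(D)$ is either empty or the single point $\infty$. As the orbits sweep out the dense open set $V$, the curves $\bar C$ form an irreducible covering family of rational curves, so their numerical class $[\bar C]$ lies in the cone of movable curves $\overline{\operatorname{Mov}}_1(X)$.

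The heart of the matter is to show that $(K_X+D)\cdot\bar C<0$. Since $K_X+D=\det\Omega_X^1(\log D)$, this amounts to $\deg\det\nu^*T_X(-\log D)\ge1$. The orbit parametrization identifies the generator of $\operatorname{Lie}(\bG_a)$ with the logarithmic vector field $v\in H^0(X,T_X(-\log D))$ generating the action, and the logarithmic differential of $\nu$ embeds $T_{\bP^1}(-\log\infty)\cong\cO_{\bP^1}(1)$ into $\nu^*T_X(-\log D)$ as a subsheaf containing $\nu^*v$. The point I expect to be the main obstacle is upgrading this to the required determinant bound: one must show that the general member of the covering family is \emph{free in the logarithmic sense}, i.e.\ that $\nu^*T_X(-\log D)$ is nef, which simultaneously encodes the freeness of $\bar C$ as a rational curve on $X$ and the transversality of its single contact with $D$ at $\nu(\infty)$. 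Granting this, the positive sub-line-bundle $\cO_{\bP^1}(1)$ forces $\deg\det\nu^*T_X(-\log D)\ge1$, equivalently $(K_X+D)\cdot\bar C\le-1<0$ (and if $\bar C\cap D=\varnothing$ the ordinary freeness of $\bar C$ already gives $(K_X+D)\cdot\bar C=K_X\cdot\bar C\le-2$).

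Finally I would invoke the duality between the pseudo-effective cone and the cone of movable curves (Boucksom--Demailly--Paun--Peternell): a pseudo-effective class pairs non-negatively with every movable curve class, so $(K_X+D)\cdot\bar C\ge0$. This contradicts the negativity just established, ruling out the existence of $\bG_a\subseteq\Aut(X,D)$ and proving that $G$ is semi-abelian.
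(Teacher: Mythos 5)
Your reduction to the log smooth case, the contradiction setup with $\bG_a$, and the production of a covering family of rational curves meeting $D$ in at most one point all match the paper's argument. The divergence is at the final, and only genuinely nontrivial, step. The paper concludes by citing \cite[Lemma 2.1]{BZ15}, which states precisely that a smooth projective variety covered by rational curves meeting the reduced boundary in at most one point cannot have pseudo-effective $K_X+D$. You instead try to prove this from scratch, and your argument hinges on the claim that the general orbit closure is \emph{log free}, i.e.\ that $\nu^*T_X(-\log D)$ is nef --- a claim you explicitly flag as ``the main obstacle'' and then assume (``Granting this\dots''). That assumption is the entire mathematical content of the step: without it you only control the single sub-line-bundle $T_{\bP^1}(-\log\infty)\cong\cO_{\bP^1}(1)$ coming from the logarithmic differential of $\nu$, which says nothing about the remaining summands of $\nu^*T_X(-\log D)$ and hence nothing about $\deg\det\nu^*T_X(-\log D)=-(K_X+D)\cdot\bar C$. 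You are right that some such positivity is unavoidable: the naive estimate $D\cdot\bar C\le 1$ fails because the orbit closure may be tangent to $D$ to high order at $\nu(\infty)$, so the set-theoretic bound on $\#(\bar C\cap D)$ does not bound the intersection number. Ordinary freeness of the general member of a covering family (Koll\'ar's generic-smoothness argument in characteristic zero) gives nefness of $\nu^*T_X$ but not of $\nu^*T_X(-\log D)$; upgrading it requires the logarithmic deformation theory of $\bA^1$-curves, and that is exactly what is missing from your write-up. This is a genuine gap, located precisely where the paper outsources the work to Brion--Zhang.

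Two smaller points. The appeal to Boucksom--Demailly--P\u{a}un--Peternell is unnecessary: the class of a general member of a covering family pairs non-negatively with every effective, hence with every pseudo-effective, divisor class directly, so only the trivial direction of that duality is used. The rest of your argument --- equivariant log resolution, non-negativity of the log discrepancies $a_i+1$, generic freeness of the $\bG_a$-action, and the observation that a general orbit closure meets $D$ in at most the one point $\nu(\infty)$ --- is correct and coincides with the paper's.
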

\begin{proof}
Take a $G$-equivariant log resolution $\pi\colon \widetilde{X}\to X$ of the pair $(X,D)$.
Then we may write
\begin{equation*} 
K_{\widetilde{X}} + \widetilde{D} = \pi^*(K_X + D) + \sum a_iE_i,
\end{equation*}
where $\widetilde{D}\coloneqq \pi^{-1}_*D + E$ with $\pi^{-1}_*D$ the strict transform of $D$ and $E\coloneqq \sum E_i$ the sum of all $\pi$-exceptional divisors.
Note that for every $E_i$, the log discrepancy $a_i \coloneqq 1 + a(E_i,X,D)$ is non-negative, since $(X,D)$ is log canonical.
Thus $K_{\widetilde{X}} + \widetilde{D}$ is also pseudo-effective.
Moreover, $G$ is a subgroup of $\Aut(\widetilde{X},\widetilde{D})$ since $\pi$ is a $G$-equivariant log resolution.
Therefore, replacing $(X,D)$ by $(\widetilde{X},\widetilde{D})$, we may assume that $(X,D)$ is log smooth.

Suppose to the contrary that $G$ contains some algebraic subgroup isomorphic to $\bG_a$.
Consider the faithful action of $\bG_a$ on $(X,D)$.
It is a generically free action since $\bG_a$ admits no non-trivial algebraic subgroup.
More precisely, outside the closed subset $F$ of all fixed points of $\bG_a$-action, this action is free,
i.e., the $\bG_a$-orbit of any point $x\in X\setminus F$ is isomorphic to the affine line via the orbit map.
Thus we obtain a dominating family of rational curves on $X$ by completing these $\bG_a$-orbits.
A general rational curve (not contained in $D$) of this family can only intersect the boundary $D$ in at most one point.
Note that if a proper variety is dominated by rational curves, then it is in fact covered by rational curves (cf.~\cite[Corollary 1.4.4]{Kollar96}).
Hence by \cite[Lemma 2.1]{BZ15}, it follows that $K_X + D$ is not pseudo-effective which contradicts our assumption.
\end{proof}

Next, we give an upper bound of the dimension of a semi-abelian variety acting faithfully on an arbitrary algebraic variety.

\begin{lemma} \label{lemma_max_dim}
Let $G$ be a semi-abelian variety.
Suppose that $G$ acts faithfully on an algebraic variety $V$ of dimension $n$. Then we have
$$\dim G\le \min \! \left\{n - \bar{\kappa}(V), n \right\} \! .$$
In particular, if $\dim G = n$, then $V$ contains a Zariski open orbit with trivial isotropy group.
\end{lemma}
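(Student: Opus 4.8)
The plan is to reduce the whole statement to a single structural fact: the action of $G$ has \emph{finite} generic stabilizer (and, in the extremal case, trivial stabilizer). Granting this, the bound $\dim G\le n$ is immediate because a general orbit then has dimension $\dim G$; the sharper bound $\dim G\le n-\bar{\kappa}(V)$ follows by fibering $V$ into orbits and invoking logarithmic easy addition; and the last assertion is read off from the extremal case. So I would organize the proof as: (i) finiteness of the generic stabilizer, (ii) the easy-addition bound, (iii) the open-orbit conclusion.

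For (i) I would argue as follows. Since $G$ is semi-abelian it is commutative, so $G_{gx}=G_x$ for all $g\in G$ and $x\in V$; that is, the stabilizer is constant along each orbit. Let $U\subseteq V$ be the dense open locus on which $\dim G_x$ attains its minimum $d$, and consider the morphism $\psi\colon U\to\mathrm{Gr}_d(\mathfrak{g})$ sending $x$ to the isotropy Lie algebra $\mathfrak{g}_x=\{\xi\in\mathfrak{g}:\xi \text{ vanishes at } x\}=\mathrm{Lie}(G_x^{\circ})$. Each value of $\psi$ is the Lie algebra of a connected algebraic subgroup of $G$. The crucial input is that a semi-abelian variety has only \emph{countably many} connected algebraic subgroups (its subtori, and the abelian subvarieties of its abelian quotient, are each countable), so $\psi(U)$ lands in a countable subset of the Grassmannian. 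As $U$ is irreducible and we work over $\bC$, an irreducible constructible set contained in a countable set is a single point, so $\mathfrak{g}_x\equiv\mathfrak{g}_0$ is constant. Then for any $\xi\in\mathfrak{g}_0$ the fundamental vector field of $\xi$ vanishes on the dense set $U$, hence identically; since $G$ is connected and acts faithfully this forces $\xi=0$, so $\mathfrak{g}_0=0$ and $d=0$. I expect this to be the main obstacle, and it is precisely where the hypothesis that $G$ is semi-abelian (equivalently, contains no $\bG_a$) is indispensable: the faithful action of $\bG_a^2$ on $\bA^2$ via $(s,t)\cdot(x,y)=(x+s+ty,\,y)$ has a one-dimensional generic stabilizer, and the countability of subgroups, which fails for $\bG_a^2$, is exactly what rules this out.

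With $d=0$, a general orbit has dimension $\dim G$, whence $\dim G\le\dim V=n$. For the sharper bound I would pass to a Rosenlicht rational quotient $q\colon V\ratmap W$ whose general fibre is a general orbit $O\cong G/G_x$; as $G_x$ is finite, $O$ is again a semi-abelian variety and therefore $\bar{\kappa}(O)=0$. Replacing $V$ and $W$ by smooth models so that $q$ becomes a morphism (using the birational invariance of the logarithmic Kodaira dimension, so that the general fibre is smooth and still has $\bar{\kappa}=0$), the logarithmic easy addition theorem (cf.~\cite{Iitaka82}) gives
\[
\bar{\kappa}(V)\le\bar{\kappa}(O)+\dim W=0+(n-\dim G),
\]
that is $\dim G\le n-\bar{\kappa}(V)$. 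Combined with $\dim G\le n$, this yields $\dim G\le\min\{n-\bar{\kappa}(V),\,n\}$.

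Finally, for the extremal case, if $\dim G=n$ then a general orbit $O$ has dimension $n=\dim V$ and is therefore Zariski open and dense in $V$. Because $G$ is commutative, its (finite) stabilizer $G_x$ fixes the entire orbit $O$ pointwise; since $O$ is dense and the action is faithful, this forces $G_x=\{e\}$. Hence $V$ contains a Zariski open orbit isomorphic to $G$ with trivial isotropy group, as claimed.
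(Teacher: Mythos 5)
Your proof is correct and reaches the paper's conclusion, but the key step --- generic finiteness of the stabilizers --- is handled by a genuinely different mechanism. The paper splits $G$ into its torus part and abelian quotient: it quotes Demazure to see that $T_G$ acts generically freely, and uses the fact that an isotropy group $G_x$, having the fixed point $x$, is affine, so that $G_x^{\circ}\subseteq T_G$ and hence $G_x^{\circ}\subseteq (T_G)_x=\{e\}$ for generic $x$. You instead show that the isotropy Lie algebra $\mathfrak{g}_x$ is generically constant because it ranges over the countable set of Lie algebras of connected algebraic subgroups of $G$, and then kill it by faithfulness; both arguments isolate the semi-abelian hypothesis at exactly the right place (your $\bG_a^2$ example is the correct counterexample to keep in mind). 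Your route is more self-contained, but it leans on the countability of connected algebraic subgroups of a semi-abelian variety, and your parenthetical justification of that fact is incomplete: knowing the subtorus $H\cap T_G$ and the image of $H$ in $A_G$ does not by itself enumerate the possible $H$, since one must also control the extensions. To close this, invoke $\Hom(A,T)=0$ for $A$ an abelian variety and $T$ a torus (so that, after reducing to the case $H\cap T_G$ finite, $H$ is determined by an isogeny onto its image, of which there are countably many), or argue that $\mathrm{Lie}(H)$ is spanned by $\pi_1(H)\subseteq\pi_1(G)$, a subgroup of a finitely generated abelian group. The remaining steps --- the Rosenlicht quotient plus Iitaka's easy addition formula for the bound $\dim G\le n-\bar{\kappa}(V)$, and commutativity plus faithfulness for the triviality of the isotropy group in the open-orbit case --- coincide with the paper's proof.
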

\begin{proof}
Let $T_G$ denote the algebraic torus as in the definition of the semi-abelian variety $G$.
Then $T_G$ acts generically freely on $V$ by \cite[\S 1.6, Corollaire~1]{Demazure70}.
In other words, there exists a Zariski open subvariety $U$ of $V$ such that the isotropy group $(T_G)_x$ is trivial for any $x\in U$.
Note that the isotropy group $G_x$ has a fixed point $x$ and hence is affine by \cite[Proposition~2.1.6]{BSU13}.
Thus the neutral component of $G_x$ is contained in $(T_G)_x$, so is trivial for any $x\in U$.
Therefore, $G_x$ is finite for any $x\in U$.
Then we can easily get
$$n = \dim V \ge \dim G\cdot x = \dim G - \dim G_x = \dim G.$$

Suppose that $\dim G = n = \dim V$.
Then for any $x\in U$, the orbit $G\cdot x$ is Zariski-dense in $V$.
Equivalently, since every orbit is locally closed, $G\cdot x$ is a Zariski open subvariety of $V$.
Note that the isotropy group $G_x$ acts trivially on $G\cdot x$ because $G$ is commutative, so does $G_x$ on $V$.
This implies that $G_x$ is trivial since the whole $G$-action is faithful.
Thus in this optimal case, we have proved the assertion in the lemma.

On the other hand, by a theorem due to Rosenlicht (cf.~\cite[Theorem 2]{Rosenlicht56}),
there exists a Zariski open subvariety $V_0$ of $V$ such that the geometric quotient $V_0/G$ exists.
Consider the natural quotient map $V_0 \to V_0/G$ with a general fibre $F = G\cdot x_0$ for some $x_0\in U\cap V_0$.
By Iitaka's easy addition formula (cf.~\cite[Theorem 11.9]{Iitaka82}), we have
$$\bar{\kappa}(V) \le \bar{\kappa}(V_0) \le \bar{\kappa}(F) + \dim(V_0/G).$$
Note that this general fibre $F$ is isomorphic to $G/G_{x_0}$, where $G_{x_0}$ is finite as $x_0\in U$.
Thus $F$ is also a semi-abelian variety and hence $\bar{\kappa}(F) = 0$.
By a dimension formula for quotient varieties, we have
$$\dim(V_0/G) = \dim V - \dim G + \underset{x\in V}{\min} \dim G_x = n - \dim G.$$
Combining the last two displayed (in)equalities, we prove that $\dim G \le n - \bar{\kappa}(V)$.
Together with $\dim G\le n$ we just proved, we obtain the desired upper bound of $\dim G$.
\end{proof}

\begin{rmk}
Without the condition $\dim G = n$, one can still show that the semi-abelian variety $G$ acts generically freely on $V$.
Actually, the isotropy group $G_x$ is generically finite by the proof of Lemma \ref{lemma_max_dim}.
Note that finite subgroups of a semi-abelian variety form a countable family.
So one may use \cite[Lemma 5]{Iitaka77} to conclude that $G_x$ is generically trivial.
\end{rmk}

\begin{proof}[Proof of Theorem \ref{ThmA}]
Replacing $(X,D)$ by some $G$-equivariant log resolution as in the proof of Lemma \ref{lemma_semi_abelian},
we may assume that $(X,D)$ is log smooth with pseudo-effective $K_X + D$,
and $V \coloneqq X\setminus D$ is a smooth quasi-projective subvariety of $X$.
Note that after this replacement, the Iitaka dimension $\kappa(X, K_X + D)$ remains the same equal to $\bar{\kappa}(V)$.
In Lemma \ref{lemma_semi_abelian} we have proved that $G$ is a semi-abelian variety.
We may then regard $G\le \Aut^0(X,D)$ as an algebraic subgroup of $\Aut(V)$.
Indeed, the natural restriction map $G \to G|_{V}$ is an isomorphism.
Thus applying Lemma \ref{lemma_max_dim} to the faithful $G$-action on $V$, the assertion (\ref{ThmA_1}) follows.

For the assertion (\ref{ThmA_2}), by the above lemma again,
$X$ (actually $V$) contains a Zariski open orbit $V'$ with trivial isotropy group (so $V'\isom G$).
Let $D'\coloneqq X\setminus V'$ be the (total) boundary of this almost homogeneous variety.
By taking a $G$-equivariant log resolution, we may assume that $D'$ is a simple normal crossing divisor containing $D$.
Thus by \cite[Theorem 1.1]{BZ15}, we have $K_X + D'\sim 0$.
But $K_X + D$ is already pseudo-effective.
So $D=D'$ and hence $K_X + D\sim 0$.
Since the push-forward of a linearly trivial divisor is also linearly equivalent to zero,
the original pair $(X,D)$ has trivial log canonical divisor.
This shows the assertion (\ref{ThmA_2}).

Suppose that $\kappa(X, K_X + D)\ge 0$ and $\dim G=n$.
Then $\bar{\kappa}(V) = \kappa(X, K_X + D) = 0$.
We now show the second equality in assertion (\ref{ThmA_3a}), i.e., $\dim A_G = q(X)$.
First it follows from \cite[Theorem~28]{Kawamata81} that the quasi-Albanese morphism $\alpha_V\colon V\to \cA_V$ is an open algebraic fibre space
(i.e., generically surjective with irreducible general fibres).
In particular,
$$\bar{q}(V) = \dim \cA_V \le \dim V = n.$$
By the universal properties of the Albanese morphism $\alb_V$ and the quasi-Albanese morphism $\alpha_V$,
we know that the $G$-action on $V$ descends uniquely to an action of $G$ on the abelian variety $\Alb(V)$ and the semi-abelian variety $\cA_V$, respectively.
In other words, we have the following two exact sequences of connected algebraic groups:
$$1 \longrightarrow K_A \longrightarrow G \longrightarrow \Aut^0(\Alb(V))=\Alb(V),$$
$$1 \longrightarrow K_\alpha \longrightarrow G \longrightarrow \Aut^0(\cA_V),$$
where $K_A$ and $K_\alpha$ denote the corresponding kernels.
On the other hand, both $G$ and $\cA_V$ are semi-abelian,
so we also have the following two exact sequences of connected algebraic groups:
$$1\longrightarrow T_G \longrightarrow G \xrightarrow{\alb_G} A_G \longrightarrow 1,$$
$$1\longrightarrow T_{\cA_V} \longrightarrow \cA_V \xrightarrow{\alb_{\cA_V}} \Alb(\cA_V) = \Alb(V) \longrightarrow 1.$$

By the Nishi--Matsumura theorem (cf.~\cite{Matsumura63}), the induced group homomorphism $G\to \Alb(V)$ factors through $A_G$
such that the group homomorphism $A_G\to \Alb(V)$ has a finite kernel.
In particular, we have
\begin{equation*}
\dim A_G\le \dim \Alb(V)=q(X).
\end{equation*}
Identify the torus $T_G/T_G\cap K_\alpha$ with its image in $\Aut^0(\cA_V)$.
Note that the induced action of $T_G$ on the abelian variety $\Alb(V)$ is trivial.
Thus $T_G/T_G\cap K_\alpha$ acts faithfully on $T_{\cA_V}$.
Note that a torus acting faithfully on another torus must act by multiplication.
Set $d\coloneqq \dim T_G$. We have
\begin{equation*}
d - \dim T_G\cap K_\alpha = \dim T_G/T_G\cap K_\alpha \le \dim T_{\cA_V} \eqqcolon t.
\end{equation*}
Note that the torus $T_G\cap K_\alpha$ acts trivially on $\cA_V$,
and hence it acts faithfully on the general fibre $F$ of the quasi-Albanese morphism $\alpha_V\colon V\to \cA_V$.
By \cite[\S 1.6, Corollaire 1]{Demazure70} we have
\begin{equation*}
\dim T_G\cap K_\alpha \le \dim F = \dim V - \dim \cA_V = n - \bar{q}(V).
\end{equation*}
In order to satisfy $\dim G = d+\dim A_G = n = q(X)+t+(n-\bar{q}(V))$,
all of the last three displayed inequalities should be equalities.
In particular, we have $\dim A_G = q(X)$.

Note that $(X,D)$ is a projective dlt pair and hence $X$ has only rational singularities (cf.~\cite[Theorem 5.22]{KM98}).
For such $X$, its irregularity $q(X)$ does not depend on its resolution.
Thus $\dim \Alb(V)$ equals $q(X)$ of the original $X$.
This completes the proof of the assertion (\ref{ThmA_3a}).

For the last assertion (\ref{ThmA_3b}), if the dimension of the torus part $T_G$ of $G$ is maximal (i.e., $d = n$),
then we have $\dim G = n$ and hence all statements in the assertion (\ref{ThmA_3a}) hold.
Moreover, it follows from $G = T_G$ that $A_G = 0$.
Thus $q(X) = \dim A_G = 0$. (In particular, $\cA_V$ is an algebraic torus of dimension $\bar{q}(V) = t\le n$.)
We have completed the proof of Theorem \ref{ThmA}.
\end{proof}

\section{Proof of Theorem \ref{ThmD}} \label{section_proof_2}

In this section, we will prove Theorem \ref{ThmD} which shall be divided into Theorems \ref{ThmB} and \ref{ThmC}.
The first theorem considers the logarithmic Iitaka surfaces (i.e., $\bar{\kappa} = 0$ and $\bar{p}_g = 1$),
while all other algebraic surfaces with $\bar{\kappa} = \bar{p}_g = 0$ are dealt with by the second one.
We first prepare some general results used to prove both theorems.

\subsection{Preliminaries}

We will frequently and implicitly use the following lemma to compare logarithmic invariants of an algebraic variety with an open subvariety.

\begin{lemma}[{cf.~\cite[Proposition 11.4]{Iitaka82}}] \label{lemma_open_subvar}
Let $X$ be an algebraic variety and $U$ a nonempty Zariski open subvariety of $X$. Then we have
\[
\bar{q}(X) \le \bar{q}(U),\ \bar{p}_g(X)\le \bar{p}_g(U)\ \text{and}\ \bar{\kappa}(X) \le \bar{\kappa}(U).
\]
\end{lemma}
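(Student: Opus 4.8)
The plan is to realize both $X$ and $U$ as complements of boundary divisors inside a single smooth projective variety, and then reduce all three inequalities to the monotonicity of the relevant global sections (and Iitaka dimensions) under enlarging the boundary. Since the three invariants are independent of the chosen log smooth completion, I am free to pass to any convenient model.

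First I would fix a log smooth completion $(Y,D)$ of $X$, so that $Y$ is smooth projective, $D$ is reduced SNC, and $\bar{q}(X)=h^0(Y,\Omega_Y^1(\log D))$, $\bar{p}_g(X)=h^0(Y,K_Y+D)$, $\bar{\kappa}(X)=\kappa(Y,K_Y+D)$. Writing $Z$ for the preimage in $Y\setminus D$ of the closed set $X\setminus U$, I would separate $Z$ into its divisorial part $B$ (a reduced divisor whose components are not contained in $D$) and a residual part of $\Codim\ge 2$. The key reduction is that deleting a closed subset of codimension $\ge 2$ from the smooth variety $Y$ changes none of the three invariants: logarithmic forms and pluri-log-canonical sections extend across such loci by Hartogs, so one may assume $U=Y\setminus(D\cup B)$ up to such a deletion.

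Next, after a further blow-up of the pair (which, again, leaves the invariants of both $X$ and $U$ unchanged by independence of the completion), I would arrange that $D+B$ is SNC, so that $(Y,D+B)$ is a log smooth completion of $U$. At this point the three inequalities become inclusions of global sections on the fixed $Y$: the sheaf inclusion $\Omega_Y^1(\log D)\hookrightarrow\Omega_Y^1(\log(D+B))$ yields $\bar{q}(X)\le\bar{q}(U)$; the effectivity of $B$ gives $\cO_Y(K_Y+D)\hookrightarrow\cO_Y(K_Y+D+B)$, hence $\bar{p}_g(X)\le\bar{p}_g(U)$; and multiplying by the $m$-th power of the defining section of $B$ injects $H^0(Y,m(K_Y+D))$ into $H^0(Y,m(K_Y+D+B))$ for every $m$, so the growth rates satisfy $\bar{\kappa}(X)=\kappa(Y,K_Y+D)\le\kappa(Y,K_Y+D+B)=\bar{\kappa}(U)$.

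The routine parts — the sheaf inclusions and the Iitaka-dimension monotonicity — are immediate once the common model is in place. The main obstacle I anticipate is the bookkeeping in the first two steps: checking carefully that passing to $Y$, deleting the $\Codim\ge 2$ residual locus, and blowing up to make $D+B$ simple normal crossing are all genuinely invisible to $\bar{q}$, $\bar{p}_g$, and $\bar{\kappa}$. All three facts follow from the stated independence of the log smooth completion together with Hartogs-type extension, but assembling them so that the boundary for $U$ honestly contains the boundary for $X$ on one and the same $Y$ is where the argument must be handled with care.
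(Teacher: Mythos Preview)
Your proposal is correct and follows the standard approach: arrange a single smooth projective model on which the boundary for $U$ contains the boundary for $X$, then read off all three inequalities from the obvious sheaf inclusions $\Omega_Y^1(\log D)\hookrightarrow\Omega_Y^1(\log(D+B))$ and $\cO_Y(m(K_Y+D))\hookrightarrow\cO_Y(m(K_Y+D+B))$. Your caution about the bookkeeping is well placed; the two points that need to be made precise are (i) that the preimage of $U$ in $Y\setminus D$ has the same log invariants as $U$ itself (this is because $\pi^{-1}(U)\to U$ is a proper birational morphism from a smooth variety, so any log smooth completion of $\pi^{-1}(U)$ serves as one for $U$), and (ii) that deleting the codimension~$\ge 2$ residual locus is invisible to the invariants (on the blown-up model the extra boundary consists of $\pi$-exceptional divisors over smooth centers disjoint from $D$, and $\pi_*\cO(NE)=\cO$ handles $\bar\kappa$ and $\bar p_g$, while the residue sequence handles $\bar q$).

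As for comparison with the paper: there is nothing to compare. The paper does not give its own proof of this lemma; it is stated with a reference to \cite[Proposition~11.4]{Iitaka82} and used as a black box. The argument you outline is essentially the one found in Iitaka's book (there stated for $\bar\kappa$; the cases of $\bar q$ and $\bar p_g$ are the $m=1$ analogues for $\Omega^1(\log)$ and $\Omega^n(\log)$ respectively).
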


The following lemma is a slight generalization of \cite[Lemma 2]{Iitaka79} which is used to compute $\bar{q}$.

\begin{lemma} \label{Iitaka_lemma_q_bar}
Let $(X,D)$ be a log smooth pair of dimension $n$ with $V\coloneqq X\setminus D$.
Let $\sum D_i$ be the irreducible decomposition of $D$.
Then we have
$$\bar{q}(V) = q(X) + \rank \Ker(\oplus \bZ [D_i] \to \NS(X)),$$
where $\NS(X)\coloneqq \Pic(X)/\Pic^0(X)$ denotes the N\'eron--Severi group of $X$.
\end{lemma}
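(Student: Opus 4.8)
The plan is to compute $\bar{q}(V) = h^0(X, \Omega_X^1(\log D))$ by exploiting the fundamental exact sequence of sheaves that governs logarithmic differentials, namely the residue sequence
\begin{equation*}
0 \longrightarrow \Omega_X^1 \longrightarrow \Omega_X^1(\log D) \xrightarrow{\ \mathrm{res}\ } \bigoplus_i \cO_{D_i} \longrightarrow 0,
\end{equation*}
where the residue map sends a logarithmic form to its Poincar\'e residues along the components $D_i$ of $D$. First I would pass to the associated long exact sequence in cohomology. Its beginning reads
\begin{equation*}
0 \longrightarrow H^0(X, \Omega_X^1) \longrightarrow H^0(X, \Omega_X^1(\log D)) \xrightarrow{\ \mathrm{res}\ } \bigoplus_i H^0(D_i, \cO_{D_i}) \xrightarrow{\ \delta\ } H^1(X, \Omega_X^1).
\end{equation*}
Since $X$ is smooth projective and each $D_i$ is irreducible, $H^0(X, \Omega_X^1)$ has dimension $q(X)$ and each $H^0(D_i, \cO_{D_i}) \cong \bC$, so the middle term $\bigoplus_i H^0(D_i, \cO_{D_i})$ is just $\bigoplus_i \bC$, one copy per component.

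The key step is to identify the connecting homomorphism $\delta$ with the cycle-class map. By Hodge theory on the smooth projective variety $X$, the group $H^1(X, \Omega_X^1)$ contains $\NS(X)\otimes\bC$ (via $\NS(X) \hookrightarrow H^2(X,\bZ) \to H^1(X,\Omega_X^1)$), and the standard computation of the residue/connecting map shows that $\delta$ sends the generator of the summand indexed by $D_i$ to the cohomology class $[D_i]$, i.e. $\delta$ is essentially the composite
\begin{equation*}
\bigoplus_i \bC \cong \bigoplus_i \bZ[D_i]\otimes\bC \xrightarrow{\ \mathrm{cl}\ } \NS(X)\otimes\bC \hookrightarrow H^1(X, \Omega_X^1).
\end{equation*}
Granting this identification, the left-exactness of the sequence gives
\begin{equation*}
\bar{q}(V) = h^0(X, \Omega_X^1(\log D)) = q(X) + \dim_{\bC}\Ker(\delta) = q(X) + \rank\Ker\!\left(\bigoplus_i \bZ[D_i] \to \NS(X)\right),
\end{equation*}
where in the last equality I use that the kernel of a map of finitely generated free abelian groups has the same rank before and after tensoring with $\bC$, and that the map $\bigoplus_i \bZ[D_i] \to \NS(X)$ has the same kernel-rank whether its target is taken to be $\NS(X)$ or the subspace of $H^1(X,\Omega_X^1)$ it generates.

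The main obstacle is precisely the identification of the connecting homomorphism $\delta$ with the cycle-class map, together with checking that the relevant map $H^2(X,\bZ)\to H^1(X,\Omega_X^1)$ restricts to an injection on $\NS(X)$ (so that classes $[D_i]$ that are numerically trivial, hence zero in $\NS(X)$, land in the kernel of $\delta$ and vice versa). This is a local-to-global computation of the residue map against the first Chern class: locally around $D_i$ the logarithmic form $\der z/z$ has residue $1$ along $D_i$, and its class under the boundary map is the Chern class of $\cO_X(D_i)$, which is standard but needs to be stated carefully. I would cite the analogous computation in Iitaka's original \cite[Lemma 2]{Iitaka79} (which treats the surface case) and note that the argument via the residue sequence is dimension-independent, thereby obtaining the stated generalization to arbitrary dimension $n$. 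The only mild subtlety is the distinction between $\NS(X) = \Pic(X)/\Pic^0(X)$ and its image in $H^2(X,\bZ)$ modulo torsion; since we only ever take ranks of kernels, torsion phenomena do not affect the final formula.
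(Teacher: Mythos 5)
Your argument is correct, but it follows a genuinely different route from the paper's. The paper works topologically: it uses the long exact sequence of local cohomology (Gysin sequence) for the pair $(X,D)$ with $\bZ$-coefficients, identifies $H^2_D(X,\bZ)\isom \oplus \bZ[D_i]$ via Mayer--Vietoris and Poincar\'e--Lefschetz duality, and then converts the resulting isomorphism $H^1(V,\bZ)/H^1(X,\bZ)\isom \Ker(\oplus\bZ[D_i]\to H^2(X,\bZ))$ into the stated formula using the mixed-Hodge-theoretic identity $\dim_\bC H^1(V,\bC)=\bar q(V)+q(X)$ (degeneration of the logarithmic Hodge--de Rham spectral sequence). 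You instead work coherently with the residue sequence $0\to\Omega^1_X\to\Omega^1_X(\log D)\to\oplus_i\cO_{D_i}\to 0$ and read off $\bar q(V)$ directly from its long exact sequence; the burden then shifts to the two points you correctly flag, namely that the connecting map $\delta$ sends the $i$-th generator to $c_1(\cO_X(D_i))\in H^1(X,\Omega^1_X)$ (standard, e.g.\ via the local computation for $\der z/z$, or Esnault--Viehweg's treatment of the residue sequence) and that $\NS(X)\otimes\bQ\to H^1(X,\Omega^1_X)$ is injective (which follows from the injectivity of $\NS(X)\to H^2(X,\bZ)$ together with the fact that algebraic classes are of type $(1,1)$, so the projection to $H^{1,1}$ loses nothing). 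Your approach has the advantage of needing only classical Hodge theory on the compact $X$ rather than Deligne's theory on the open $V$, at the cost of the Chern-class identification of $\delta$; the paper's approach gets the integral kernel $\Ker(\oplus\bZ[D_i]\to\NS(X))$ on the nose but leans on the mixed Hodge structure of $H^1(V)$. Both are complete proofs, and your observation that torsion in $\NS(X)$ is irrelevant because only ranks are taken is the right way to dispose of the last subtlety.
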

\begin{proof}
We first have the following long exact sequence of local cohomology:
$$\cdots \to H^1_D(X, \bZ) \to H^1(X, \bZ) \to H^1(V, \bZ) \to H^2_D(X, \bZ) \to H^2(X, \bZ) \to \cdots,$$
where $H^i_D(X, \bZ)$ are the cohomology groups of $X$ with support in $D$.
Since $D$ is of codimension $1$ in $X$, we get that $H^i_D(X, \bZ) = 0$ for $i = 0,1$ (cf.~\cite[Lemma 23.1]{milneLEC}).
It is also known that
$$H^2_D(X, \bZ) \isom \oplus H^2_{D_i}(X, \bZ) \isom \oplus H^0(D_i, \bZ) = \oplus \bZ[D_i],$$
as the first isomorphism follows from the Mayer--Vietoris sequence and the second one holds by Poincar\'e--Lefschetz duality.
Note that $\dim_{\bC} H^1(V, \bC) - \dim_{\bC} H^1(X, \bC) = (\bar{q}(V) + q(X)) - 2q(X) = \bar{q}(V) - q(X)$.
On the other hand, the preceding exact sequence yields
$$H^1(V, \bZ)/H^1(X, \bZ) \isom \Ker(\oplus \bZ [D_i] \to H^2(X, \bZ)) = \Ker(\oplus \bZ [D_i] \to \NS(X)).$$
So we obtain the lemma.
\end{proof}

As a corollary of the above lemma and the well-known behavior of N\'eron--Severi groups under a point blowup,
we can readily see that the logarithmic irregularity $\bar{q}$ is (somewhat) invariant under any point blowup.

\begin{lemma} \label{lemma_q_bar}
Let $(X,D)$ be a log smooth pair of dimension $n$ with $V\coloneqq X\setminus D$.
Let $\pi\colon \widetilde{X}\to X$ be the blowup of some point.
Let $\widetilde{D}$ be the sum of the strict transform $\pi^{-1}_*D$ and some reduced effective exceptional divisors.
Denote $\widetilde{V}\coloneqq \widetilde{X}\setminus \widetilde{D}$.
Then we have $\bar{q}(\widetilde{V}) = \bar{q}(V)$. \qed
\end{lemma}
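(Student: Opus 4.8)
The plan is to combine Lemma \ref{Iitaka_lemma_q_bar}, which expresses $\bar{q}$ in terms of $q(X)$ and the rank of a kernel involving the N\'eron--Severi group, with the classical description of how $\Pic$ and $\NS$ behave under a single point blowup. First I would record the two standard facts for the blowup $\pi\colon \widetilde{X}\to X$ of a point $p$ with exceptional divisor $F$: namely $q(\widetilde{X}) = q(X)$ (the irregularity is a birational invariant of smooth projective varieties), and there is a direct-sum decomposition $\NS(\widetilde{X}) \isom \pi^*\NS(X) \oplus \bZ[F]$, with $\pi^*$ injective and the class $[F]$ freely generating the extra summand (pulling back a divisor followed by intersecting with $F$ gives zero, while $F\cdot F = -1$, which forces independence). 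This is the ``well-known behavior'' the statement alludes to.

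Next I would set up the two copies of the formula from Lemma \ref{Iitaka_lemma_q_bar}. Writing $D = \sum D_i$ and letting $\widetilde{D} = \pi^{-1}_* D + \sum_{j\in J} F_j$, where the $F_j$ are the chosen reduced exceptional components (for a single point blowup there is only one exceptional divisor $F$, so either $\widetilde{D} = \pi^{-1}_*D$ or $\widetilde{D} = \pi^{-1}_*D + F$), the two formulas read
\[
\bar{q}(V) = q(X) + \rank\Ker\!\bigl(\textstyle\bigoplus_i \bZ[D_i] \to \NS(X)\bigr),
\]
\[
\bar{q}(\widetilde{V}) = q(\widetilde{X}) + \rank\Ker\!\bigl(\textstyle\bigoplus_i \bZ[\pi^{-1}_*D_i] \oplus \bigoplus_{j} \bZ[F_j] \to \NS(\widetilde{X})\bigr).
\]
Since $q(\widetilde{X}) = q(X)$, it remains to show the two kernel ranks coincide. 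The plan is to trace the map on $\widetilde{X}$ through the decomposition $\NS(\widetilde{X}) = \pi^*\NS(X)\oplus\bZ[F]$: a strict transform $\pi^{-1}_* D_i$ maps to $\pi^*[D_i] - m_i[F]$ for the multiplicity $m_i$ of $D_i$ at $p$, while each exceptional component $F_j = F$ maps to $[F]$. I would then argue that any relation in the kernel upstairs must have its $[F]$-coefficient vanish, which (using the $\bZ[F]$ summand) pins down the contribution of the exceptional generators and reduces the relation to one pulled back from $\NS(X)$; conversely every downstairs relation lifts. This gives a rank-preserving correspondence between the two kernels, so the extra summands contribute nothing to the rank and $\bar{q}(\widetilde{V}) = \bar{q}(V)$.

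The step I expect to be the main (though modest) obstacle is the careful bookkeeping of the $[F]$-coefficients when an exceptional component is actually included in $\widetilde{D}$: one must verify that adding $[F]$ as a free generator to the source, while it also becomes an element of the target via $[F]\mapsto[F]$, does not change the kernel rank. The clean way is to observe that the induced map on the $\bZ[F]$-components is an isomorphism onto the $\bZ[F]$-summand of $\NS(\widetilde{X})$, so by a short diagram/snake argument the kernel is carried isomorphically onto $\Ker(\bigoplus_i\bZ[D_i]\to\NS(X))$; no subtlety survives beyond checking that the $m_i[F]$ terms can always be absorbed. Everything else is the routine invariance $q(\widetilde{X})=q(X)$ and the functoriality of $\pi^*$, which I would invoke rather than reprove.
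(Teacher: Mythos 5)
Your approach is the one the paper intends: the lemma is stated with a \qed{} and no written proof, only the remark that it follows from Lemma \ref{Iitaka_lemma_q_bar} together with the behaviour of $\NS$ under a point blowup, and that is exactly your plan. Your bookkeeping is also correct in the case where the exceptional divisor $F$ is included in $\widetilde{D}$: there the map on the $\bZ[F]$-summands is an isomorphism, every downstairs relation $(a_i)$ lifts to $(a_i;\sum_i a_i m_i)$, and the two kernel ranks agree. (One small imprecision: $F\cdot F=-1$ only makes sense for $n=2$; in general the independence of $[F]$ from $\pi^*\NS(X)$ comes from $\pi_*[F]=0$, or from $F\cdot \ell=-1$ for a line $\ell$ in $F\isom \bP^{n-1}$.)

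The genuine gap is in the other case, $\widetilde{D}=\pi^{-1}_*D$ with the centre $p$ lying on $D$. Then the source has no $\bZ[F]$-generator, an element $(a_i)$ of the upstairs kernel must satisfy \emph{both} $\sum_i a_i[D_i]=0$ in $\NS(X)$ and the extra linear condition $\sum_i a_i m_i=0$, and there is nothing into which the $m_i[F]$ terms can be ``absorbed''; downstairs relations need not lift. This is not just a gap in the write-up: the statement itself fails in that configuration. Take $X=E_1\times E_2$ a product of elliptic curves and $D=D_1+D_2$ with $D_j=E_1\times\{t_j\}$ two disjoint fibres; then $[D_1]=[D_2]$ in $\NS(X)$, so $\bar{q}(V)=q(X)+1=3$. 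Blow up a point $p\in D_1$ and set $\widetilde{D}=\pi^{-1}_*D_1+\pi^{-1}_*D_2$, which is still a log smooth pair. The map sends $(a,b)$ to $(a+b)\pi^*[D_1]-a[F]$, whose kernel is trivial, so $\bar{q}(\widetilde{V})=2$. (Concretely, the logarithmic $1$-form with residues $+1$ along $D_1$ and $-1$ along $D_2$ pulls back to a form with a log pole along $F$, hence does not survive once $F$ is excluded from the boundary.) So the lemma needs the additional hypothesis that either $F\subseteq\widetilde{D}$ or $p\notin D$. In the paper's only application (the proof of Theorem \ref{ThmB}) the blown-down $(-1)$-curves are contained in $X_i\setminus D_{A,i}$, so all $m_i=0$ and your argument goes through verbatim; but as written, your claim that ``the $m_i[F]$ terms can always be absorbed'' is exactly the point that breaks.
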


For a log surface, we also need the following formula to calculate $\bar{p}_g$ which is due to Sakai (cf.~\cite[Lemma 1.12]{Sakai80}).
See also \cite[Chapter 1, Lemma 2.3.1]{Miyanishi01} for a similar treatment.

\begin{lemma} \label{lemma_p_g_bar}
Let $(X,D)$ be a log smooth pair of dimension $2$ with $V\coloneqq X\setminus D$.
Then we have
$$\bar{p}_g(V) = p_g(X) + h^1(\cO_D) - q(X) + \gamma(D),$$
where $\gamma(D) \coloneqq \dim \Ker \{ H^1(X, \cO_X) \to H^1(D, \cO_D) \}$.
In particular, if $X$ is a regular surface, then $\bar{p}_g(V) = p_g(X) + h^1(\cO_D)$.
\end{lemma}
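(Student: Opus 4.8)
The plan is to relate $\bar{p}_g(V) = h^0(X, K_X + D)$ to the invariants of $X$ and $D$ through a single short exact sequence arising from adjunction. Since $D$ is a reduced simple normal crossing divisor on the smooth surface $X$, it is a projective Gorenstein curve, and the adjunction formula gives $\cO_X(K_X + D)|_D \isom \omega_D$, the dualizing sheaf of $D$. Tensoring the structure sequence $0 \to \cO_X \to \cO_X(D) \to \cO_X(D)|_D \to 0$ by $\cO_X(K_X)$ therefore yields
\begin{equation*}
0 \longrightarrow \cO_X(K_X) \longrightarrow \cO_X(K_X + D) \longrightarrow \omega_D \longrightarrow 0.
\end{equation*}

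First I would pass to the associated long exact sequence in cohomology,
\begin{equation*}
0 \to H^0(K_X) \to H^0(K_X + D) \to H^0(\omega_D) \xrightarrow{\ \delta\ } H^1(K_X) \to \cdots,
\end{equation*}
which gives at once $\bar{p}_g(V) = p_g(X) + h^0(\omega_D) - \rank(\delta)$. Two of these terms are handled by duality: Serre duality on $X$ gives $h^1(K_X) = h^1(\cO_X) = q(X)$, while Serre--Grothendieck duality on the Gorenstein curve $D$ gives $h^0(\omega_D) = h^1(\cO_D)$. The desired formula will then follow once I establish that $\rank(\delta) = q(X) - \gamma(D)$.

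The key step, and the main obstacle, is precisely the computation of $\rank(\delta)$. Since $\im(\delta) = \Ker[H^1(K_X) \to H^1(K_X + D)]$, I would apply Serre duality to the map $H^1(K_X) \to H^1(K_X + D)$ induced by the inclusion $\cO_X(K_X) \injmap \cO_X(K_X + D)$: it is dual to the map $H^1(\cO_X(-D)) \to H^1(\cO_X)$ induced by $\cO_X(-D) \injmap \cO_X$. Hence $\im(\delta)$ is dual to the cokernel of $H^1(\cO_X(-D)) \to H^1(\cO_X)$. Reading this cokernel off the long exact sequence of $0 \to \cO_X(-D) \to \cO_X \to \cO_D \to 0$, I find it isomorphic to the image of the restriction map $H^1(\cO_X) \to H^1(\cO_D)$, whose dimension equals $q(X) - \gamma(D)$ by the definition of $\gamma(D)$. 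This gives $\rank(\delta) = q(X) - \gamma(D)$ and hence the stated identity. The care needed here lies in checking that the two Serre duality identifications are compatible, i.e. functorial with respect to the sheaf inclusions.

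Finally, when $X$ is regular one has $q(X) = h^1(\cO_X) = 0$, which forces $\gamma(D) = 0$, so the formula collapses to $\bar{p}_g(V) = p_g(X) + h^1(\cO_D)$.
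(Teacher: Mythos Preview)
Your proof is correct. The paper does not supply a proof of its own for this lemma; it simply attributes the result to Sakai \cite[Lemma 1.12]{Sakai80} (with a pointer to \cite[Chapter 1, Lemma 2.3.1]{Miyanishi01} for a similar treatment) and moves on. Your argument via the adjunction sequence $0 \to \cO_X(K_X) \to \cO_X(K_X+D) \to \omega_D \to 0$ together with Serre duality on $X$ and on the Gorenstein curve $D$ is the standard route, and your handling of $\rank(\delta)$ by passing to $\Ker[H^1(K_X)\to H^1(K_X+D)]$ and dualizing to the cokernel of $H^1(\cO_X(-D))\to H^1(\cO_X)$ is clean and avoids having to dualize the connecting homomorphism directly. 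The functoriality of Serre duality that you flag as the only delicate point is indeed routine for maps induced by morphisms of locally free sheaves, so there is no gap.
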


\begin{remark} \label{rmk_h1}
For a log smooth surface pair $(X,D)$, if $D$ is connected such that $h^1(\cO_D) = 1$,
then $D$ contains a smooth elliptic curve or a cycle of smooth rational curves.
Indeed, recall that the {\it arithmetic genus} of the divisor $D$ is defined as $p_a(D) \coloneqq 1-\chi(\cO_D) = h^1(\cO_D)$,
where $\chi(\cF)\coloneqq \sum (-1)^i h^i(\cF)$ denotes the {\it Euler characteristic} of a coherent sheaf $\cF$.
By the Riemann--Roch theorem, one has the following genus formula:
$$p_a(D) = 1 + \frac{1}{2}D.(K_X + D).$$
Then it is easy to see that the arithmetic genus of a tree of smooth rational curves is zero.
Meanwhile, if $D$ contains a curve of genus greater than one, then $p_a(D)\ge 2$.

For a general (not necessarily connected) boundary $D$, the above genus formula also holds.
By the induction on the number of the connected components of $D$,
one can show that
$$h^1(\cO_D) = \sum h^1(\cO_{D^j}),$$
where each $D^j$ is a connected component of $D$.
Therefore, if we assume that $h^1(\cO_D) = 1$,
then there exists a unique connected component of $D$ such that its arithmetic genus is one.
We know the behavior of this connected component from the discussion above.
\end{remark}

\subsection{Logarithmic Iitaka surfaces}

In this subsection, we will prove Theorem \ref{ThmD} in the setting of the title which means $\bar{\kappa}=0$ and $\bar{p}_g=1$.
We actually prove the following theorem.

\begin{thm} \label{ThmB}
Let $V$ be a logarithmic Iitaka surface, and $(X, D)$ a log smooth completion such that $V = X\setminus D$.
Then the neutral component of $\Aut(X, D)$ is a semi-abelian variety of dimension at most $\bar{q}(V)$.
\end{thm}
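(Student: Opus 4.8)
The plan is to translate the dimension bound into a statement about logarithmic sheaf cohomology and then exhibit a single injective linear map. Write $G = \Aut^0(X,D)$. Since $\bar{\kappa}(V) = 0$, the log canonical divisor $K_X + D$ is pseudo-effective, so Lemma \ref{lemma_semi_abelian} already yields that $G$ is a semi-abelian variety; it remains to bound $\dim G$. The key observation is that the Lie algebra of $\Aut^0(X,D)$ is $H^0(X, T_X(-\log D))$, the space of global vector fields tangent to $D$, so that $\dim G = h^0(X, T_X(-\log D))$; on the other hand $\bar{q}(V) = h^0(X, \Omega_X^1(\log D))$ by definition. Thus the theorem reduces to producing an injection $H^0(X, T_X(-\log D)) \hookrightarrow H^0(X, \Omega_X^1(\log D))$.

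To build this injection I would use the hypothesis $\bar{p}_g(V) = 1$. It provides a nonzero global logarithmic $2$-form $\eta \in H^0(X, \Omega_X^2(\log D)) = H^0(X, \cO_X(K_X + D))$. Contraction against $\eta$ is a morphism of sheaves $T_X(-\log D) \otimes \Omega_X^2(\log D) \to \Omega_X^1(\log D)$ (a logarithmic $2$-form contracted by a logarithmic vector field remains logarithmic, as one checks in local coordinates adapted to the SNC divisor $D$), and passing to global sections yields a linear map $\Phi\colon H^0(X, T_X(-\log D)) \to H^0(X, \Omega_X^1(\log D))$, $\Phi(\xi) = \iota_\xi \eta$.

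The heart of the argument is that $\Phi$ is injective. Suppose $\iota_\xi \eta = 0$ for some $\xi \neq 0$. The zero locus of $\xi$ and the divisor $\operatorname{div}(\eta)$ are both proper closed subsets, so at a general point $p \in V = X \setminus D$ we have $\xi_p \neq 0$ and $\eta_p \neq 0$. But at such a point $\eta_p$ is a nondegenerate alternating $2$-form on the $2$-dimensional space $T_p X$, so $v \mapsto \iota_v \eta_p$ is an isomorphism $T_p X \to \Omega_{X,p}^1$ and $\iota_{\xi_p} \eta_p \neq 0$, contradicting $\iota_\xi \eta = 0$. Hence $\Phi$ is injective and $\dim G = h^0(X, T_X(-\log D)) \le h^0(X, \Omega_X^1(\log D)) = \bar{q}(V)$, as desired.

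I expect the main obstacle to be conceptual rather than computational: recognizing that contraction against the (essentially unique) logarithmic top form is exactly the device converting infinitesimal automorphisms into logarithmic $1$-forms. The two points requiring care are the identification $\mathrm{Lie}(\Aut^0(X,D)) = H^0(X, T_X(-\log D))$ and the well-definedness of $\iota_\bullet \eta$ as a map of logarithmic sheaves; the nondegeneracy step only needs one general point of $V$, which exists since $D$ and $\operatorname{div}(\eta)$ are proper. As a consistency check I would compare with Theorem \ref{ThmA}: it forces $\dim G \le 2$ and, in the extremal case $\dim G = 2$, gives $K_X + D \sim 0$, so that $\eta$ is nowhere vanishing, $\Phi$ is an isomorphism, and $\bar{q}(V) = 2$, matching our bound.
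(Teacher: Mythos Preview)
Your argument is correct and considerably more direct than the paper's. The key identity you exploit is that on a surface the contraction pairing is a sheaf isomorphism
\[
T_X(-\log D)\;\xrightarrow{\ \sim\ }\;\Omega_X^1(\log D)\otimes\big(\Omega_X^2(\log D)\big)^{\vee},
\]
so tensoring with a nonzero $\eta\in H^0(X,\Omega_X^2(\log D))$ is simply multiplication by a nonzero section of the line bundle $\cO_X(K_X+D)$ on a torsion-free sheaf, hence injective on global sections. Your pointwise nondegeneracy argument is a concrete way to see the same thing; either version works, and the hypothesis $\bar p_g(V)=1$ is exactly what supplies $\eta$.

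The paper proceeds quite differently. It isolates a distinguished subdivisor $D_A\le D$ (Remark~\ref{rmk_D_A}) with $\Aut^0(X,D)\le\Aut^0(X,D_A)$, runs a $G$-equivariant contraction of $(-1)$-curves disjoint from $D_A$ until none remain, and then invokes Lemma~\ref{lemma_abundance} to get $K_{X_m}+D_{A,m}\sim 0$; only at that terminal step does it use the sheaf identification $\Theta_{X_m}(-\log D_{A,m})\cong\Omega_{X_m}^1(\log D_{A,m})$ to conclude $\dim\Aut^0(X_m,D_{A,m})=\bar q(X_m\setminus D_{A,m})$, and finally compares $\bar q$'s via Lemmas~\ref{lemma_q_bar} and~\ref{lemma_open_subvar}. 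In effect the paper first arranges for $\eta$ to be \emph{nowhere vanishing} and then reads off an isomorphism, whereas you allow $\eta$ to have zeros and settle for an injection---which is all the theorem needs. Your route bypasses $D_A$, the abundance-type Lemma~\ref{lemma_abundance}, the MMP-style contraction, and the $\bar q$-comparison lemmas; the paper's route, in exchange, yields the structural byproduct $K_{X_m}+D_{A,m}\sim 0$ (a short reproof of Iitaka's Theorem~III) and an \emph{equality} $\dim\Aut^0(X_m,D_{A,m})=\bar q(X_m\setminus D_{A,m})$ on the minimal model.
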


By the classification theory of smooth projective surfaces,
we can show that for a logarithmic Iitaka surface $V$,
the ambient surface $X$ can only have the following possibilities.
In this paper, a {\it ruled surface} always means a birationally ruled surface, i.e., birationally equivalent to $C\times \bP^1$ for some smooth curve $C$.
The genus of the base curve $C$ is also called the genus of the ruled surface.
In particular, an {\it elliptic ruled surface} is a ruled surface of genus $1$.

\begin{lemma} \label{lemma_log_Iitaka}
Let $V$ be a logarithmic Iitaka surface, and $(X, D)$ a log smooth completion such that $V = X\setminus D$.
Then $X$ belongs to one of the following cases:
\begin{enumerate}[{\em (1)}]
  \item \label{lemma_log_Iitaka1} $X$ is a rational surface; $p_g(X)=q(X)=0$ and $h^1(\cO_D)=1$.
  \item \label{lemma_log_Iitaka2} $X$ is an elliptic ruled surface; $p_g(X)=0$ and $q(X)=1$.
  \item \label{lemma_log_Iitaka3} $X$ is (birationally) a $K3$ surface or an abelian surface; $p_g(X)=1$.
\end{enumerate}
\end{lemma}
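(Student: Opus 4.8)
The plan is to classify the possible ambient surfaces $X$ by combining the two numerical constraints we have on a logarithmic Iitaka surface, namely $\bar{p}_g(V)=1$ and $\bar{\kappa}(V)=0$, with the Enriques--Kodaira classification of smooth projective surfaces. The key input is Lemma \ref{lemma_p_g_bar}, which expresses $\bar{p}_g(V)$ in terms of invariants of $X$ and of the boundary $D$. Writing $\bar{p}_g(V)= p_g(X)+h^1(\cO_D)-q(X)+\gamma(D)$ and using $\bar{p}_g(V)=1$ together with the fact that all the quantities $p_g(X)$, $h^1(\cO_D)$, $q(X)$, and $\gamma(D)$ are nonnegative integers (with $\gamma(D)\le q(X)$ by definition), I would first extract strong numerical restrictions on $p_g(X)$. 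Since $\bar{\kappa}(V)=0$ forces $\kappa(X)\le\bar{\kappa}(V)=0$ by Lemma \ref{lemma_open_subvar} applied to $V\subset X$ (actually $\kappa(X)\le\bar\kappa(V)$ because $K_X\le K_X+D$), the surface $X$ has nonpositive Kodaira dimension, so $X$ is birationally either rational, ruled, a K3 surface, an Enriques surface, a bielliptic surface, or an abelian surface, up to the appropriate minimal models.

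First I would dispose of the cases with $p_g(X)\ge 2$: the formula forces $p_g(X)\le \bar{p}_g(V)=1$ since the other contributions $h^1(\cO_D)-q(X)+\gamma(D)$ are nonnegative (here one uses $\gamma(D)\ge 0$ and $h^1(\cO_D)\ge 0$, while $-q(X)+\gamma(D)\ge -q(X)+\gamma(D)$; more carefully, since $X$ has $\kappa(X)\le 0$ one already knows $p_g(X)\in\{0,1\}$). When $p_g(X)=1$, a surface of nonpositive Kodaira dimension with geometric genus one must be birationally a K3 surface or an abelian surface, which is case \eqref{lemma_log_Iitaka3}. When $p_g(X)=0$, the Enriques--Kodaira list leaves rational surfaces ($q(X)=0$), (elliptic and higher) ruled surfaces, Enriques surfaces ($q=0$), and bielliptic surfaces ($q=1$); the latter two have $\kappa=0$ but I must rule them out or absorb them, and I expect the constraint $\bar\kappa(V)=0$ combined with $\bar p_g(V)=1$ to be exactly what selects the admissible subcases.

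The main work is then to pin down $q(X)$ and $h^1(\cO_D)$ in the $p_g(X)=0$ case. Here I would use Lemma \ref{lemma_p_g_bar} in the form $1 = h^1(\cO_D)-q(X)+\gamma(D)$, together with the elementary bound $0\le\gamma(D)\le q(X)$, which gives $h^1(\cO_D)=1+q(X)-\gamma(D)\in\{1,\dots,1+q(X)\}$ and in particular $h^1(\cO_D)\ge 1$. To exclude ruled surfaces of genus $\ge 2$ and to force the genus-one ruled case, I would invoke the constraint $\bar{\kappa}(V)=0$: on a ruled surface of genus $g\ge 2$ over a curve $C$, the image of $V$ under the Albanese (or the ruling-induced map to $C$) would produce positive logarithmic Kodaira dimension, contradicting $\bar\kappa(V)=0$; the Iitaka easy addition formula $\bar\kappa(V)\le\bar\kappa(F)+\dim(\text{base})$ applied to a suitable fibration is the clean tool. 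This shows that in the ruled (non-rational) case the base genus is at most $1$, yielding the elliptic ruled surface of case \eqref{lemma_log_Iitaka2} with $q(X)=1$, $p_g(X)=0$; and the rational case \eqref{lemma_log_Iitaka1} has $p_g(X)=q(X)=0$, whence the formula immediately gives $h^1(\cO_D)=1$.

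The hardest step, I expect, is the careful elimination of the borderline $\kappa(X)=0$ surfaces (Enriques and bielliptic) and of the higher-genus ruled surfaces using only $\bar\kappa(V)=0$ and $\bar p_g(V)=1$, without quoting Iitaka's classification of logarithmic Iitaka and logarithmic $K3$ surfaces. For a bielliptic surface one has $q(X)=1$ and $p_g(X)=0$, so the formula gives $h^1(\cO_D)=1+1-\gamma(D)=2-\gamma(D)$; ruling this out (or folding it into the allowed list) requires analyzing the boundary $D$ on such $X$ and showing the resulting $\bar\kappa(V)$ cannot be $0$, for instance via the structure of the Albanese map $X\to E$ to an elliptic curve and the behavior of $K_X+D$ along its fibres. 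For an Enriques surface, $\kappa(X)=0$ but $p_g(X)=0$ and $q(X)=0$, so $h^1(\cO_D)=1+\gamma(D)=1$; I would need to argue that such an $X$ is already covered by case \eqref{lemma_log_Iitaka1} up to the birational qualifier or else excluded because a nontrivial boundary on an Enriques surface forces $\bar\kappa(V)>0$. Since the lemma phrases case \eqref{lemma_log_Iitaka3} birationally (``$X$ is (birationally) a $K3$ surface or an abelian surface''), I would handle these minimal-model and birational issues by passing between $X$ and its minimal model and tracking invariants, which is where the bulk of the verification lies.
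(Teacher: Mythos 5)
Your overall strategy --- combine $\kappa(X)\le\bar\kappa(V)=0$ and $p_g(X)\le\bar p_g(V)=1$ with Sakai's formula (Lemma \ref{lemma_p_g_bar}) and the Enriques--Kodaira classification, then use addition formulas to pin down the base genus in the ruled case --- is the same as the paper's, and your treatment of the rational and ruled cases is essentially correct. One local imprecision there: Iitaka's easy addition formula $\bar\kappa(V)\le\bar\kappa(F_V)+\dim B$ only yields $\bar\kappa(F_V)\ge 0$; to \emph{exclude} base genus $\ge 2$ you need the opposite inequality, i.e.\ Kawamata's addition theorem for fibrations of relative dimension one, $\bar\kappa(V)\ge\bar\kappa(F_V)+\kappa(B)$, which forces $\kappa(B)=0$. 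The paper uses both inequalities in tandem; citing only easy addition as ``the clean tool'' would not close this step.

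The genuine gap is the elimination of the $\kappa(X)=0$, $p_g(X)=0$ surfaces (Enriques and bielliptic), which you explicitly defer as ``the hardest step'' and for which your sketched ideas would not work as stated. For Enriques surfaces your suggestion that $X$ might be ``covered by case (1)'' fails (an Enriques surface is not rational), and the claim that a nontrivial boundary forces $\bar\kappa(V)>0$ is not where the contradiction comes from. The paper's mechanism is the \'etale $K3$ double cover $\sigma\colon\widetilde X\to X$: since $\bar p_g(V)=1$ and $p_g(X)=q(X)=0$ force $h^1(\cO_{D^1})=1$ for some connected component $D^1$ of $D$, multiplicativity of $\chi(\cO)$ under finite \'etale covers gives $h^1(\cO_{\widetilde D^1})=1$ upstairs, whence $\bar p_g(\widetilde V)\ge p_g(\widetilde X)+1=2$, contradicting $\bar\kappa(\widetilde V)=\bar\kappa(V)=0$. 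For bielliptic surfaces your proposed route through the Albanese fibration could perhaps be completed, but it requires a nontrivial analysis of which vertical boundaries are allowed and does not obviously handle non-minimal $X$; the paper instead contracts to the minimal model $X_m$, where $K_{X_m}\sim_{\bQ}0$ and $D_m$ is nef, so $0\ge\kappa(X_m,K_{X_m}+D_m)=\kappa(X_m,D_m)\ge 0$ and abundance for surfaces forces $D_m\sim_{\bQ}0$, hence $D_m=0$; then $H^0(X,K_X+D)=H^0(X_m,K_{X_m})=0$, contradicting $\bar p_g(V)=1$. Without these (or equivalent) arguments the case list in the lemma is not established.
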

\begin{proof}
By Lemma \ref{lemma_open_subvar}, one has $\kappa(X)\le \bar{\kappa}(V) = 0$ and $p_g(X)\le \bar{p}_g(V) = 1$.
We first consider the case $\kappa(X) < 0$.
If $q(X)=0$, this is just the case (\ref{lemma_log_Iitaka1}).
If $q(X)\ge 1$, we will show that $X$ cannot be a ruled surface of genus $q(X)> 1$.
Let $\alb_X\colon X\to B\subseteq \Alb(X)$ be the Albanese morphism of $X$ with $B$ a smooth projective curve of genus $q(X)\ge 1$ because $p_g(X)=0$.
For a general point $b\in B$, the fibre $F\coloneqq\alb_X^{-1}(b)$ of $b$ is a smooth rational curve.
Define $F_V \coloneqq F\cap V$.
Then by Iitaka's easy addition formula (cf.~\cite[Theorem 11.9]{Iitaka82}), we have
$$0 = \bar{\kappa}(V) \le \bar{\kappa}(F_V) + \dim B,$$
which implies $\bar{\kappa}(F_V) \ge 0$.
On the other hand, by Kawamata's addition formula (for morphisms of relative dimension one, cf.~\cite[Theorem 11.15]{Iitaka82}), we have
$$0 = \bar{\kappa}(V) \ge \bar{\kappa}(F_V) + {\kappa}(B)\ge {\kappa}(B)\ge 0.$$
Thus $\bar{\kappa}(F_V) = {\kappa}(B) = 0$ and hence $B$ is an elliptic curve.
This gives us the case (\ref{lemma_log_Iitaka2}).

We next consider the case $\kappa(X) = 0$.
To obtain the case (\ref{lemma_log_Iitaka3}), we just need to rule out the Enriques surfaces and the hyperelliptic surfaces.
If $X$ is (birationally) an Enriques surface, then there exists a finite \'etale cover $\sigma\colon\widetilde{X}\to X$ for some (birationally) $K3$ surface $\widetilde{X}$.
Let $\widetilde{D}\coloneqq \sigma^{-1}D$ and $\widetilde{V}\coloneqq \widetilde{X}\setminus \widetilde{D}$.
Then we have $\bar{\kappa}(\widetilde{V}) = \bar{\kappa}(V) = 0$
since $\sigma|_{\widetilde{V}}\colon \widetilde{V}\to V$ is also a finite \'etale cover (cf.~\cite[Theorem 11.10]{Iitaka82}).
On the other hand, it follows from Lemma \ref{lemma_p_g_bar} that $h^1(\cO_D)=1$
and hence there exists a unique connected component $D^1$ of $D$ such that $h^1(\cO_{D^1})=1$ (see also Remark \ref{rmk_h1}).
Take a connected component $\widetilde{D}^{1}$ of $\widetilde{D}$ which maps onto $D^1$.
Then consider the induced finite \'etale cover $\widetilde{D}^1 \to D^1$.
Say its degree is $d$. We obtain the following equalities:
$$1-h^1(\cO_{\widetilde{D}^1}) = \chi(\cO_{\widetilde{D}^1}) = d \cdot \chi(\cO_{D^1}) = d \cdot (1-h^1(\cO_{D^1})) = 0.$$
Hence by Lemma \ref{lemma_p_g_bar} again we have
$\bar{p}_g(\widetilde{V}) = p_g(\widetilde{X}) + h^1(\cO_{\widetilde{D}}) \ge p_g(\widetilde{X}) + h^1(\cO_{\widetilde{D}^1}) = 2$.
This contradicts the fact $\bar{\kappa}(\widetilde{V}) = 0$.

If $X$ is (birationally) a hyperelliptic surface, then there exists a minimal model $X_m$ of $X$ obtained by contracting all $(-1)$-curves.
Let $\mu\colon X\to X_m$ denote the composite contraction morphism and $D_m\coloneqq \mu_*D$.
There is an effective divisor $R_{\mu}$ with support the union of all $\mu$-exceptional divisors
such that $K_X=\mu^*K_{X_m}+R_\mu$ by the ramification formula (cf.~\cite[Theorem 5.5]{Iitaka82}).
Then we have
\begin{align*}
0 &= \bar{\kappa}(V) = \kappa(X, K_X + D) = \kappa(X,\mu^*K_{X_m}+R_{\mu}+D) \\
&= \kappa(X,\mu^*K_{X_m}+ N R_{\mu}+D) \text{ for } N\gg 0 \text{ by \cite[Lemma 10.5]{Iitaka82}}\\
&\ge \kappa(X,\mu^*K_{X_m}+ \mu^*D_m) = \kappa(X_m,K_{X_m}+D_m).
\end{align*}
Note that $K_{X_m}\sim_{\bQ} 0$ and $D_m$ is nef (see e.g. \cite[Lemma 2.11]{HTZ16}).
So we have
$$0 \ge \kappa(X_m, K_{X_m}+D_m) = \kappa(X_m,D_m) \ge 0,$$
and hence $D_m \sim_{\bQ} 0$ by the abundance theorem for surfaces (cf.~\cite[Theorem 6.2]{Fujino12}).
Then $D_m$ being effective implies that $D_m=0$, i.e., $D$ is $\mu$-exceptional.
Thus by the projection formula,
\begin{align*}
H^0(X, K_X + D) = H^0(X,\mu^*K_{X_m}+R_{\mu}+D) 
= H^0(X_m,K_{X_m}) = 0.
\end{align*}
This contradicts the assumption $\bar{p}_g(V) = h^0(X, K_X + D) = 1$.
\end{proof}

\begin{remark} \label{rmk_D_A}
Let $V$ be a logarithmic Iitaka surface, and $(X, D)$ a log smooth completion such that $V = X\setminus D$.
Let $\sum D_i$ be the irreducible decomposition of $D$.
We would like to introduce an associated divisor $D_A$ separately for each case in Lemma \ref{lemma_log_Iitaka} as follows.
\begin{enumerate}[(1)]
  \item \label{rmk_D_A_1} If $X$ is a rational surface, by Remark \ref{rmk_h1} there are two subcases:
  \begin{enumerate}[(i)]
    \item \label{rmk_D_A_1i} $D_i$ is a smooth elliptic curve for some $i$, then let $D_A = D_i$;
    \item \label{rmk_D_A_1ii} $p_a(D_i) = 0$ for all $i$,
    then there is a cycle of smooth rational curves $\sum_{i=1}^r\! D_i \eqqcolon \! D_A$.
  \end{enumerate}
  \item \label{rmk_D_A_2} If $X$ is an elliptic ruled surface, we have seen in the proof of Lemma \ref{lemma_log_Iitaka} that
  the general fibre $F_V$ (of the restriction morphism $\alb_X|_V\colon V\to B$) is rational and has logarithmic Kodaira dimension zero.
  So it is isomorphic to the one-dimensional algebraic torus $\bG_m$ and hence $D.F=2$.
  We then denote by $D_A$ the sum of all irreducible components of $D$ which are mapped onto $B$ by the Albanese morphism (or ruled fibration) $\alb_X$.
  Namely, $D_A$ is a sum of two cross-sections or a double section of $\alb_X$.
  \item \label{rmk_D_A_3} If $p_g(X)=1$, then let $D_A=0$.
\end{enumerate}

We then claim that in each case above, $X\setminus D_A$ is still a logarithmic Iitaka surface.
Indeed, it suffices to show that $\bar{p}_g(X\setminus D_A) = 1$ since $\bar{\kappa}(X\setminus D_A)\le \bar{\kappa}(X\setminus D) = 0$.
The case (\ref{rmk_D_A_1}) is easy by Lemma \ref{lemma_p_g_bar}.
For the case (\ref{rmk_D_A_2}), our $\bar{p}_g$-formula may not be used due to some unknown invariants.
However, we note that the general fibre of $X\setminus D_A\to B$ is still $\bG_m$ by the definition of $D_A$.
So by Kawamata's addition formula (cf.~\cite[Theorem 11.15]{Iitaka82}),
$\bar{\kappa}(X\setminus D_A)\ge \bar{\kappa}(\bG_m) + {\kappa}(B) = 0$.
Choose $M\in |K_X + D|$ and $N\in |m(K_X + D_A)|$ for some positive integer $m$.
Then by $mM\sim m(K_X + D) \sim N + m(D - D_A)\ge 0$ and $\kappa(X, K_X + D) = 0$, we have $mM = N + m(D-D_A)$.
Thus $M-(D-D_A)=N/m$ is an effective divisor so that $|K_X + D_A|$ is non-empty.
\end{remark}

We keep using the notation $D_A$ till the end of this subsection.
Next, we provide a new and much shorter proof of Iitaka's Theorem III in \cite{Iitaka79}.

\begin{lemma}
\label{lemma_abundance}
Let $V$ be a logarithmic Iitaka surface, and $(X, D)$ a log smooth completion such that $V = X\setminus D$.
Suppose that there is no any $(-1)$-curve in $X\setminus D_A$.
Then $K_X + D_A \sim 0$.
\end{lemma}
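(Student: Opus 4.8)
The plan is to reduce the statement to showing that the unique effective member of the linear system $|K_X + D_A|$ is the zero divisor. By Remark \ref{rmk_D_A}, the open surface $X\setminus D_A$ is again a logarithmic Iitaka surface, so that $\kappa(X, K_X + D_A) = \bar{\kappa}(X\setminus D_A) = 0$ and $h^0(X, K_X + D_A) = \bar{p}_g(X\setminus D_A) = 1$. Hence there is a unique effective divisor $M$ with $M\sim K_X + D_A$, and it suffices to prove $M = 0$. (In the case $p_g(X)=1$ of Lemma \ref{lemma_log_Iitaka}, where $D_A = 0$, this is already immediate, since the hypothesis forces $X$ to be a minimal $K3$ or abelian surface with $K_X\sim 0$; but the argument below covers all three cases uniformly.)

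First I would record the key adjunction input: for every irreducible component $C$ of $D_A$ one has $(K_X + D_A)\cdot C = 0$. Indeed, in each relevant case of Remark \ref{rmk_D_A} the divisor $D_A$ has trivial dualizing sheaf $\omega_{D_A}\simeq \cO_{D_A}$ (it is a smooth elliptic curve, a cycle of smooth rational curves, or a pair of elliptic sections, respectively a bisection, of the elliptic ruling), so that, since $\omega_{D_A} = (K_X + D_A)|_{D_A}$ by adjunction, restricting to each component gives $(K_X + D_A)\cdot C = \deg(\omega_{D_A}|_C) = 0$.

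Next I would run the Zariski decomposition $M = P + N$ on $X$, with $P$ nef, $N = \sum_i n_i N_i\ge 0$ having negative-definite support and $P\cdot N_i = 0$ for all $i$. Since $\kappa(P) = \kappa(M) = 0$, the nef divisor $P$ is not big, so $P^2 = 0$. Suppose $N\ne 0$. From $N^2 = \sum_i n_i(N\cdot N_i) < 0$ there is a component $N_i$ with $N\cdot N_i < 0$, whence $M\cdot N_i = P\cdot N_i + N\cdot N_i < 0$; that is, $(K_X + D_A)\cdot N_i < 0$. By the adjunction fact above, $N_i$ cannot be a component of $D_A$, so $D_A\cdot N_i\ge 0$ and therefore $K_X\cdot N_i < 0$, while $N_i^2 < 0$ by negative-definiteness. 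Adjunction then forces $p_a(N_i) = 0$ and $N_i^2 = -1$, i.e.\ $N_i$ is a $(-1)$-curve; feeding $N_i^2 = -1$ back gives $D_A\cdot N_i = 0$. Thus $N_i$ is a $(-1)$-curve contained in $X\setminus D_A$, contradicting the hypothesis. Hence $N = 0$ and $M = K_X + D_A$ is nef.

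The remaining, and main, point is to pass from nefness to triviality, and here I would invoke the abundance theorem for log canonical surface pairs (cf.~\cite[Theorem 6.2]{Fujino12}) applied to the log smooth (hence lc) pair $(X, D_A)$: since $K_X + D_A$ is nef it is in fact semiample, so its Iitaka dimension equals its numerical dimension. As $\kappa(X, K_X + D_A) = 0$, we obtain $K_X + D_A\equiv 0$; being also effective ($M\ge 0$ with $M\equiv 0$), it must vanish, so $M = 0$ and $K_X + D_A\sim 0$. The subtle part is precisely this last step: a general nef divisor of numerical dimension one on an irregular surface need not be semiample, so it is essential that $K_X + D_A$ is a logarithmic canonical divisor, to which abundance applies. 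I expect the only other place requiring care to be the case analysis hidden in the adjunction computation, in particular the bisection case on the elliptic ruled surface.
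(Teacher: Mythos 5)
Your proof follows the same overall strategy as the paper's: reduce to showing that $K_X+D_A$ is nef, then conclude by the abundance theorem for log surfaces together with $\kappa(X,K_X+D_A)=0$ and $h^0(X,K_X+D_A)=1$. The only structural difference is how the offending curve is produced when nefness fails: you run the Zariski decomposition of the unique effective member $M\in|K_X+D_A|$ and extract a component $N_i$ of the negative part with $M\cdot N_i<0$ and $N_i^2<0$, whereas the paper argues directly that any irreducible $C$ with $(K_X+D_A)\cdot C<0$ must satisfy $C^2<0$ (such a $C$ lies in the support of $M$, and otherwise $M\cdot C\ge 0$). Both routes then identify the curve as a $(-1)$-curve disjoint from $D_A$, contradicting the hypothesis; your version costs slightly more machinery but is equivalent.

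The one step that is not correct as stated is the claim that $\omega_{D_A}\simeq\cO_{D_A}$ in every case, from which you deduce $(K_X+D_A)\cdot C=0$ for each component $C$ of $D_A$. This fails in case (\ref{rmk_D_A_2}) of Remark \ref{rmk_D_A}: a double section of the elliptic ruling is a smooth curve mapping with degree two onto the elliptic base, so it can have genus $\ge 2$ (ramified cover), in which case $(K_X+D_A)\cdot C=(K_X+C)\cdot C=2g(C)-2>0$; likewise two cross-sections may meet each other, again making $\omega_{D_A}$ nontrivial. What your argument actually uses, however, is only the inequality $(K_X+D_A)\cdot C\ge 0$ for components $C\subseteq D_A$, and this does hold in all cases: writing $D_A=C+D_A'$ with $D_A'\cdot C\ge 0$, one has $(K_X+C)\cdot C=2p_a(C)-2\ge 0$ since $p_a(C)\ge 1$ in cases (\ref{rmk_D_A_1i}) and (\ref{rmk_D_A_2}), while in case (\ref{rmk_D_A_1ii}) one has $(K_X+C)\cdot C=-2$ but $D_A'\cdot C=2$ because $D_A$ is a cycle of rational curves. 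This is precisely the adjunction computation in the paper's proof, so the defect is repairable; but you should replace the equality claim by this inequality with its case analysis, since as written the assertion about $\omega_{D_A}$ is false.
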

\begin{proof}
We only need to show that $K_X + D_A$ is nef.
Indeed, by the claim in Remark \ref{rmk_D_A} we have seen that $\kappa(X, K_X + D_A)=0$ and $\bar{p}_g(X\setminus D_A) = 1$.
Then by the abundance theorem for surfaces, $K_X + D_A \sim_\bQ 0$ (cf.~\cite[Theorem 6.2]{Fujino12}).
In particular, $K_X + D_A \sim 0$ since $\bar{p}_g(X\setminus D_A) = 1$.

Suppose to the contrary that there exists an irreducible curve $C$ such that $(K_X + D_A).C<0$.
Then $C^2<0$ because $\kappa(X, K_X + D_A) = 0$. If $C\nsubseteq D_A$, then $D_A.C\ge 0$ and hence $K_X.C < 0$.
So by the adjunction formula, we know that $C$ is a $(-1)$-curve and $K_X.C = -1$.
Thus $0\le D_A.C < -K_X.C = 1$ implies that $D_A.C = 0$.
This means $C\cap D_A=\varnothing$, contradicting our assumption.
If $C\subseteq D_A$, write $D_A = C + D'_A$ with $D'_A.C\ge 0$.
Then we have
$$(K_X + C).C\le (K_X + C + D'_A).C = (K_X + D_A).C < 0.$$
So by the adjunction formula again, $C$ is a smooth rational curve, i.e., $p_a(C) = 0$.
Hence the cases (\ref{rmk_D_A_1i}) and (\ref{rmk_D_A_2}) in Remark \ref{rmk_D_A} cannot happen since in both cases $p_a(C)\ge 1$.
For the case (\ref{rmk_D_A_1ii}) note that $D'_A.C=2$ since $D_A$ is a cycle of smooth rational curves. Thus
$$0 > (K_X + D_A).C=(K_X + C + D'_A).C=(K_X + C).C + 2 = 2p_a(C) = 0,$$ which is absurd.
For the last case (\ref{rmk_D_A_3}), it is obvious that $C$ is a $(-1)$-curve, which is impossible under our assumption.
\end{proof}

As we mentioned in the remark after Theorem \ref{ThmD}, the natural geometric approach may not apply to bound the dimension.
Hence the following easy observation could be thought as a starting point for proving our main theorem.

\begin{lemma} \label{aut_log_CY}
Let $(X,D)$ be a log smooth pair of dimension $2$ with $V\coloneqq X\setminus D$ such that $K_X + D\sim 0$.
Then $\dim\Aut^0(X,D) = \bar{q}(V)$.
\end{lemma}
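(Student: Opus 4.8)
The plan is to compute $\dim\Aut^0(X,D)$ infinitesimally and then to exploit the log Calabi--Yau condition $K_X+D\sim 0$ to identify two rank-two bundles. Over $\bC$ the algebraic group $\Aut^0(X,D)$ is smooth, so its dimension equals that of its Lie algebra, and the key input is the standard identification of this Lie algebra with the space of global logarithmic vector fields:
$$\dim\Aut^0(X,D) = h^0\bigl(X,\, T_X(-\log D)\bigr),$$
where $T_X(-\log D)\coloneqq \Omega_X^1(\log D)^\vee$ is the logarithmic tangent sheaf, whose sections are exactly the vector fields on $X$ tangent to $D$. As $(X,D)$ is log smooth, both $\Omega_X^1(\log D)$ and $T_X(-\log D)$ are locally free of rank $2$.

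First I would record the determinant computation: from the residue sequence $0\to\Omega_X^1\to\Omega_X^1(\log D)\to\oplus_i\cO_{D_i}\to 0$, comparing first Chern classes gives the standard identity
$$\det\Omega_X^1(\log D) \isom \cO_X(K_X+D).$$
Under the hypothesis $K_X+D\sim 0$ this becomes $\det\Omega_X^1(\log D)\isom\cO_X$. Next I would invoke rank-two self-duality: for any rank-two bundle $\cE$ the wedge pairing $\cE\otimes\cE\to\det\cE$ is perfect and yields a canonical isomorphism $\cE^\vee\isom\cE\otimes(\det\cE)^{-1}$. Applied to $\cE=\Omega_X^1(\log D)$ together with $\det\cE\isom\cO_X$, this gives
$$T_X(-\log D) = \Omega_X^1(\log D)^\vee \isom \Omega_X^1(\log D).$$
Passing to global sections and recalling $\bar{q}(V)=h^0(X,\Omega_X^1(\log D))$ then yields $\dim\Aut^0(X,D)=h^0(X,T_X(-\log D))=\bar{q}(V)$.

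The only step that is not formal is the identification of $\mathrm{Lie}\,\Aut(X,D)$ with $H^0(X,T_X(-\log D))$, so that is where I would be most careful: a global vector field $v\in H^0(X,T_X)$ lies in the Lie algebra of the stabilizer of $D$ precisely when its one-parameter flow preserves $D$, and since $D$ is reduced this is equivalent to $v$ being tangent to $D$, i.e. to $v\in H^0(X,T_X(-\log D))$; this is the usual description of logarithmic derivations. Everything else is a clean two-step reduction---the determinant formula converts the numerical hypothesis $K_X+D\sim 0$ into $\det\Omega_X^1(\log D)\isom\cO_X$, and rank-two self-duality converts that into $T_X(-\log D)\isom\Omega_X^1(\log D)$---after which the equality of $h^0$'s is immediate.
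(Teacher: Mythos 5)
Your proof is correct and takes essentially the same route as the paper: both identify the Lie algebra of $\Aut^0(X,D)$ with $H^0(X,\Theta_X(-\log D))$ and then use $\Omega_X^2(\log D)=\cO_X(K_X+D)\isom\cO_X$ together with rank-two self-duality to get $\Theta_X(-\log D)\isom\Omega_X^1(\log D)$. You simply spell out the determinant computation (via the residue sequence) and the self-duality isomorphism in more detail than the paper's one-line version.
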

\begin{proof}
Let $\Theta_X(-\log D)$ denote the logarithmic tangent sheaf,
which is just the dual of the logarithmic differential sheaf $\Omega_X^1(\log D)$.
Then $H^0(X, \Theta_X(-\log D))$ is the Lie algebra of the connected algebraic group $\Aut^0(X,D)$.
In our situation, $\Omega_X^2(\log D) = \cO_X(K_X + D) = \cO_X$.
Hence $\Theta_X(-\log D) = (\Omega_X^1(\log D))^{\vee}\isom \Omega_X^1(\log D)$.
\end{proof}

\begin{proof}[Proof of Theorem \ref{ThmB}]
We have already seen by Theorem \ref{ThmA} that $\Aut^0(X,D)$ is a semi-abelian variety.
Let $G$ denote the neutral component of $\Aut(X,D_A)$.
Then $\Aut^0(X,D)\le G$ by the special choice of $D_A$ (see Remark \ref{rmk_D_A} for its definition).
Let $$\pi\colon X=X_0 \xrightarrow{\pi_0} X_1 \xrightarrow{\pi_1} \cdots \xrightarrow{\pi_{m-1}} X_m$$
be the composition of morphisms $\pi_i$ such that for every $0\le i\le m-1$,
\begin{enumerate}[(1)]
  \item $\pi_i$ is a blowdown of some $(-1)$-curve $E_i$ in $X_i$,
  \item $D_{A,i+1}\coloneqq {\pi_i}_*D_{A,i}$,
  \item $E_i\subset X_i\setminus D_{A,i}$.
\end{enumerate}
We may assume that $X_m\setminus D_{A,m}$ has no $(-1)$-curve.
Note that $X_m\setminus D_{A,m}$ is still a logarithmic Iitaka surface and the $D_A$-part of $D_{A,m}$ is itself.
Then by Lemma \ref{lemma_abundance}, we have $K_{X_m}+D_{A,m} \sim 0$ and hence
$$\dim\Aut^0(X_m,D_{A,m}) = \bar{q}(X_m\setminus D_{A,m})$$ by Lemma \ref{aut_log_CY}.
Further, it follows from Lemmas \ref{lemma_q_bar} and \ref{lemma_open_subvar} that
$$\bar{q}(X_m\setminus D_{A,m})=\bar{q}(X\setminus D_A)\le \bar{q}(V).$$
Thus we only need to prove that $\pi$ is $G$-equivariant so that $G\le \Aut^0(X_m,D_{A,m})$.
Indeed, the class of each $(-1)$-curve in $\NE(X_i)$ is a $(K_{X_i}+D_{A,i})$-negative extremal ray,
so is preserved by the connected group $G$.
Hence each $\pi_i$ is $G$-equivariant, and so is the composition $\pi$.
\footnote{The $G$-equivariance of the morphism $\pi$ also follows from a general result of Blanchard which asserts that $\pi$ is $G$-equivariant as long as $\pi_*\cO_X = \cO_{X_m}$ (see e.g. \cite[Proposition 4.2.1]{BSU13} for the precise statement).}

From the discussion above, we have
$$\dim\Aut^0(X,D)\le \dim\Aut^0(X,D_A)\le \dim\Aut^0(X_m,D_{A,m}) = \bar{q}(X_m\setminus D_{A,m})\le \bar{q}(V).$$
This completes the proof of Theorem \ref{ThmB}.
\end{proof}

\subsection{Surfaces with \texorpdfstring{$\bar{\kappa} = \bar{p}_g = 0$}{vanishing kappa bar and pg bar}}

Parallel to the previous subsection, we are going to prove Theorem \ref{ThmD} for smooth algebraic surfaces with both logarithmic Kodaira dimension and logarithmic geometric genus vanishing.
Together with logarithmic Iitaka surfaces, they are all smooth algebraic surfaces with vanishing logarithmic Kodaira dimension.

\begin{thm} \label{ThmC}
Let $(X,D)$ be a log smooth pair of dimension $2$ with $V\coloneqq X\setminus D$ such that $\bar{\kappa}(V) = \bar{p}_g(V) = 0$.
Then the neutral component of $\Aut(X,D)$ is a semi-abelian variety of dimension at most $q(X)$.
\end{thm}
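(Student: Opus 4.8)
The plan is to feed Theorem \ref{ThmA} into a dichotomy between the toric and abelian parts of $G\coloneqq\Aut^0(X,D)$, and then to rule out a one-dimensional torus when $q(X)=0$. First I would collect the global constraints: by Lemma \ref{lemma_open_subvar} we have $p_g(X)\le\bar{p}_g(V)=0$ and $\kappa(X)\le\bar{\kappa}(V)=0$, while $\bar{\kappa}(V)=\kappa(X,K_X+D)=0$ makes $K_X+D$ pseudo-effective. Thus Theorem \ref{ThmA} applies and $G$ is semi-abelian with $\dim G\le 2$. The value $\dim G=2$ is excluded immediately: by assertion (\ref{ThmA_2}) of Theorem \ref{ThmA} it would give $K_X+D\sim 0$, hence $\bar{p}_g(V)=h^0(X,\cO_X)=1$, a contradiction. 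Therefore $\dim G\le 1$.

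Next I would split $G$ through its defining sequence $1\to T_G\to G\to A_G\to 1$. As in the proof of Theorem \ref{ThmA}, the Nishi--Matsumura theorem (see Remark \ref{rmk_Brion}) shows that $A_G\to\Alb(X)$ has finite kernel, so $\dim A_G\le q(X)$. If $\dim T_G=0$, then $\dim G=\dim A_G\le q(X)$ and we are done; in particular this settles the case in which $G$ is an elliptic curve. So I may assume $\dim T_G=1$, which (as $\dim G\le 1$) forces $G=\bG_m$ and reduces the theorem to the single claim that $q(X)\ge 1$. Suppose instead $q(X)=0$. Combined with $p_g(X)=0$ and $\kappa(X)\le 0$, the Enriques--Kodaira classification leaves exactly two cases: $X$ is rational ($\kappa(X)=-\infty$) or $X$ is birationally an Enriques surface ($\kappa(X)=0$).

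The Enriques case is quick. The closures of the one-dimensional $\bG_m$-orbits form a covering family of rational curves, so $X$ would be (uni)ruled and $\kappa(X)=-\infty$, contradicting $\kappa(X)=0$; equivalently, an Enriques surface has $H^0(\Theta_X)=0$ and admits no faithful $\bG_m$-action. The rational case is the crux. Here I would first reduce to a good $\bG_m$-equivariant model: after a sequence of equivariant blow-ups --- which leaves $q(X)$, $\bar{\kappa}(V)$, $\bar{p}_g(V)$ and rationality unchanged --- the orbit map becomes a morphism $f\colon X\to B$ onto a smooth curve, with general fibre the closure $F\cong\bP^1$ of a one-dimensional orbit, on which $\bG_m$ fixes two points. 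Since $q(X)=0$ rules out any dominant map to a curve of positive genus, $B\cong\bP^1$. Running Iitaka's easy addition and Kawamata's addition formulas for $f$ (cf.~\cite[Theorems 11.9 and 11.15]{Iitaka82}), exactly as in Lemma \ref{lemma_log_Iitaka}, forces $\bar{\kappa}(F\cap V)=0$; hence $F\cap V\cong\bG_m$, both fixed points of $F$ lie on $D$, and $(F,D|_F)=(\bP^1,\{0,\infty\})$ is log Calabi--Yau.

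It then remains to propagate this fibrewise log-triviality to the total space. Applying the canonical bundle formula to the genus-zero fibration $f$ and using $\kappa(X,K_X+D)=0$, I would obtain $K_X+D\sim_{\bQ}f^*L$ with $\deg L=0$ on $B\cong\bP^1$, so that $K_X+D\sim_{\bQ}0$. Because $\Pic(X)$ is torsion-free for a rational surface, this upgrades to $K_X+D\sim 0$, and therefore $\bar{p}_g(V)=h^0(X,\cO_X)=1$, contradicting $\bar{p}_g(V)=0$ and finishing the proof. I expect the real work to sit entirely in this last implication: deriving $K_X+D\sim_{\bQ}0$ from the $\bG_m$-quotient --- either via the canonical bundle formula (whose moduli part vanishes here, since the fibre pair $(\bP^1,\text{two points})$ is rigid) or via an equivariant $(K_X+D)$-minimal model program followed by the abundance theorem for surfaces --- together with the bookkeeping that the equivariant resolution making $f$ a morphism does not alter $\bar{\kappa}(V)$ or $\bar{p}_g(V)$.
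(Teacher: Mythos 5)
Your skeleton coincides with the paper's up to the last reduction: ruling out $\dim G=2$ via Theorem \ref{ThmA}~(\ref{ThmA_2}), bounding the complete case by Nishi--Matsumura, and reducing to showing that $G=\bG_m$ forces $q(X)\ge 1$. Your disposal of the Enriques case is fine (and arguably cleaner than needed: the paper gets ruledness for free from the Orlik--Wagreich structure theorem for $\bG_m$-surfaces). The problem is the rational case, which is exactly where the paper's Lemma \ref{p_g_bar_zero} does all of its work, and where your argument has a genuine gap. From $(K_X+D)\cdot F=0$ and $(K_X+D)|_F\sim 0$ on the general fibre of $f\colon X\to B$, together with $\kappa(X,K_X+D)=0$, you may only conclude that the unique effective $\bQ$-divisor $\bQ$-linearly equivalent to $K_X+D$ is \emph{vertical}; you cannot conclude that it is of the form $f^*L$. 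The canonical bundle formula does not supply this: in its standard form (Fujino--Mori, Ambro) the hypothesis of an lc-trivial fibration already includes $K_X+D\sim_{\bQ}f^*L$ for some $L$ on $B$, which is precisely what you are trying to establish. The obstruction is concrete: a vertical effective divisor of zero self-intersection against the fibre class may still be a proper subdivisor of a reducible fibre or a fractional multiple of a multiple fibre, and then it is not a pullback and $K_X+D\not\sim_{\bQ}0$ even though $\kappa=0$. Your remark that ``the moduli part vanishes since the fibre pair is rigid'' addresses the wrong term; it is the discriminant/vertical contribution, not the moduli part, that has to be controlled. The alternative you mention (equivariant $(K_X+D)$-MMP plus abundance) also does not close the gap as stated: the MMP output is in general a \emph{singular} rational surface, on which $K+D\sim_{\bQ}0$ only says the class is torsion in the Weil divisor class group, which need not be torsion-free; so one cannot upgrade to $\sim 0$ and deduce $\bar{p}_g=1$ by pulling back.

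For comparison, the paper closes this case differently (Lemma \ref{p_g_bar_zero}): using Orlik--Wagreich it exhibits an invariant open set $U\isom C_0\times\bG_m$, passes to an equivariant model dominating $\bP^1\times\bP^1$ with its toric-type boundary $D_Y$, and then does a case analysis on the extra boundary $\widetilde{B}=\pi^{-1}(X\setminus U)\setminus\widetilde{D}$: if $\widetilde B$ is exceptional one computes $\bar{q}(V)=2$ (Lemma \ref{lemma_q_bar}) and invokes Kawamata's theorem that the quasi-Albanese map is then birational, forcing $\bar{p}_g(V)=1$; if $\widetilde B$ dominates a curve one shows $\bar{\kappa}(V)=-\infty$ via the logarithmic ramification formula. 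Either branch contradicts the hypotheses. That case analysis is precisely the bookkeeping your final paragraph defers, and it is not a routine application of a standard formula; as written, your proof of the theorem is incomplete at this step.
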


Given a smooth algebraic surface $V$ with $\bar{\kappa}(V) = \bar{p}_g(V) = 0$, similarly with Lemma \ref{lemma_log_Iitaka},
we also have some restriction on this surface if it further admits a faithful algebraic $1$-torus action.
Recall that an elliptic ruled surface is a (birationally) ruled surface of genus $1$.

\begin{lemma} \label{p_g_bar_zero}
Let $(X,D)$ be a log smooth pair of dimension $2$ with $V\coloneqq X\setminus D$.
Suppose that $\bar{\kappa}(V) = \bar{p}_g(V) = 0$ and $\bG_m\le \Aut(X,D)$.
Then $X$ is an elliptic ruled surface.
\end{lemma}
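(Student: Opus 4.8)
The plan is to identify $X$ by separating the surface-classification input, the rigidity forced by the torus action, and the logarithmic addition formulas. First I would record the numerical constraints: by Lemma~\ref{lemma_open_subvar} we have $p_g(X)\le \bar{p}_g(V)=0$ and $\kappa(X)\le \bar{\kappa}(V)=0$, so $p_g(X)=0$ and $\kappa(X)\le 0$. The crucial observation is that a faithful $\bG_m$-action on the projective surface $X$ sweeps out $X$ by the closures of its one-dimensional orbits, which are rational curves; hence $X$ is uniruled, so $\kappa(X)=-\infty$ and $X$ is (birationally) ruled. In particular this rules out the Enriques and bielliptic cases with $\kappa(X)=0$ at once, so the delicate \'etale-cover and abundance arguments used in Lemma~\ref{lemma_log_Iitaka} are not needed here.

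Next I would exploit the ruling. Let $\rho\colon X\to C$ denote the ruling onto a smooth curve $C$ of genus $g=q(X)$. The torus acts fibrewise: when $q(X)\ge 1$ this is because $\bG_m$ is affine and hence maps trivially into the abelian variety $\Aut^0(\Alb(X))$, so $\rho=\alb_X$ is $\bG_m$-invariant; when $q(X)=0$ one takes $\rho$ to be the quotient of $X$ by $\bG_m$, a rational curve. A general fibre $F\cong \bP^1$ is therefore $\bG_m$-invariant, so $F\cap D$ is contained in the two $\bG_m$-fixed points of $F$, and we set $F_V\coloneqq F\cap V$. Iitaka's easy addition formula (cf.~\cite[Theorem 11.9]{Iitaka82}) gives $0=\bar{\kappa}(V)\le \bar{\kappa}(F_V)+\dim C$, which forces $\bar{\kappa}(F_V)\ge 0$; hence $F_V\cong \bG_m$, i.e.\ $F$ meets $D$ transversally in exactly the two fixed points, whence $(K_X+D)\cdot F=K_X\cdot F+D\cdot F=-2+2=0$. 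Kawamata's addition formula for relative dimension one (cf.~\cite[Theorem 11.15]{Iitaka82}) now yields $0=\bar{\kappa}(V)\ge \bar{\kappa}(F_V)+\kappa(C)=\kappa(C)$, so $g=q(X)\le 1$. Thus either $q(X)=1$, giving the desired elliptic ruled surface, or $q(X)=0$ and $X$ is rational, which I must exclude.

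The heart of the argument is the exclusion of the rational case, and here the hypothesis $\bar{p}_g(V)=0$ (as opposed to $\bar{p}_g(V)=1$ in Lemma~\ref{lemma_log_Iitaka}) is exactly what forces a contradiction. Since $q(X)=0$, Lemma~\ref{lemma_p_g_bar} gives $h^1(\cO_D)=0$, so $D$ is a disjoint union of trees of rational curves and the two fixed sections $S_0,S_\infty$ of $\rho$ constitute the horizontal part of $D$. I would pass to a $\bG_m$-equivariant relatively minimal model $\nu\colon X\to \bF_e$ of the ruling, under which $S_0,S_\infty$ become the two invariant sections $C_0,C_\infty$ satisfying $K_{\bF_e}+C_0+C_\infty\sim -2F$. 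Writing $D_0\coloneqq \nu_*D=C_0+C_\infty+V_0$ with $V_0$ the vertical part, of fibre-degree $k\ge 0$, we obtain $K_{\bF_e}+D_0\sim (k-2)F$. On $\bF_e$ the class $(k-2)F$ has $\kappa=0$ precisely when $k=2$, and then $h^0(K_{\bF_e}+D_0)=h^0(\bP^1,\cO(0))=1$; whereas $\bar{p}_g(V)=0$ should force $h^0=0$, i.e.\ $k\le 1$. These two are incompatible, and this contradiction eliminates the rational case.

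The main obstacle is to transfer the two equalities $\kappa(X,K_X+D)=0$ and $h^0(X,K_X+D)=\bar{p}_g(V)=0$ faithfully across $\nu$, because $X\setminus D$ and $\bF_e\setminus D_0$ are only birational: $\nu$ may contract curves meeting $V$, and the exceptional coefficients of $K_X+D-\nu^*(K_{\bF_e}+D_0)$ need not be effective when $\nu$ blows up singular or higher-multiplicity points of $D_0$. I expect to handle this as in the bielliptic computation of Lemma~\ref{lemma_log_Iitaka}: writing $K_X=\nu^*K_{\bF_e}+R_\nu$ with $R_\nu\ge 0$ exceptional, one absorbs the exceptional divisors by passing to $\nu^*K_{\bF_e}+NR_\nu+D$ for $N\gg 0$ (invoking \cite[Lemma 10.5]{Iitaka82}) together with the projection formula. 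Alternatively, one may run a $\bG_m$-equivariant $(K_X+D)$-minimal model program, which preserves each logarithmic plurigenus $h^0(m(K_X+D))$, and then use that on the resulting minimal model $K+D\sim_{\bQ}0$, combined with the torsion-freeness of the N\'eron--Severi group of a (smooth) rational surface, to deduce $K+D\sim 0$ and hence $\bar{p}_g(V)=1$, the desired contradiction.
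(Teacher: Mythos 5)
Your route is genuinely different from the paper's: you get ruledness directly from the rational orbit closures of the $\bG_m$-action (rather than from the Orlik--Wagreich structure theorem), bound the genus by the same two addition formulas, and then try to kill the rational case by a divisor-class computation on a relatively minimal model $\bF_e$, where $K_{\bF_e}+\nu_*D\sim(k-2)F$. The paper instead keeps the equivariant model $C\times\bP^1$ with its toric-type boundary and splits into cases according to whether the extra boundary curves are $\varphi$-exceptional, using Lemma \ref{lemma_q_bar} to compute $\bar q=2$ and then Kawamata's Corollary 29 (birationality of the quasi-Albanese map) to force $\bar p_g=1$, or the log ramification formula to force $\bar\kappa=-\infty$. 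Your first two steps are fine (indeed cleaner for establishing $\kappa(X)=-\infty$), and the identification of the horizontal part of $D$ with two invariant sections is essentially correct, modulo a preliminary equivariant blowup when the source or sink of the action is an isolated fixed point (so that the quotient map becomes a morphism).

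The gap is exactly at the point you flag, and neither of your proposed repairs closes it. The negligibility of exceptional divisors (Iitaka's Lemma 10.5, applied in both directions) does give $\kappa(X,K_X+D)=\kappa(\bF_e,K_{\bF_e}+D_0)$ and hence $k=2$, so $K_{\bF_e}+D_0\sim 0$ and $h^0(\bF_e,K_{\bF_e}+D_0)=1$. But the $h^0$ comparison you need goes the wrong way: writing $E'$ for the sum of $\nu$-exceptional curves not contained in $D$, the log discrepancy computation only yields $h^0(X,K_X+D+E')\ge h^0(\bF_e,K_{\bF_e}+D_0)=1$, and deleting $E'$ can destroy the section --- this is precisely the situation (extra exceptional boundary inside $V$) that the paper's Case 1 is designed to handle, and it cannot be dismissed by the projection formula. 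Your alternative fix via a $\bG_m$-equivariant $(K_X+D)$-MMP is also problematic: the $(K+D)$-negative contractions may contract curves inside $D$ with self-intersection $\le-2$, producing a singular model on which neither the preservation of $h^0(K+D)$ itself (as opposed to $h^0(m(K+D))$ for divisible $m$) nor the torsion-freeness of the Picard group is automatic. What actually closes the gap is the observation that $K_X+D\sim N$ for a $\nu$-exceptional divisor $N=\sum a_iE_i-E'$, so that $\kappa(X,K_X+D)\ge 0$ forces $N$ to be effective by the negativity lemma (an effective divisor linearly equivalent to an exceptional one must equal it), whence $\bar p_g(V)=h^0(X,K_X+D)\ge 1$, the desired contradiction; alternatively one can follow the paper and deduce $\bar p_g(V)=1$ from $\bar q(V)=2$ via Kawamata's theorem. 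As written, your proof stops one lemma short of a contradiction.
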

\begin{proof}
By a classical characterization of the $\bG_m$-surfaces (i.e., algebraic surfaces admitting algebraic $1$-torus action),
there exists an invariant Zariski open subvariety $U\subseteq V$ equivariantly isomorphic to $C_0\times \bG_m$
with $\bG_m$ acting only on the second factor by translation,
where $C_0$ is a smooth curve (probably non-projective, cf.~\cite[\S1.6, Lemma and \S2.2, Theorem]{OW77}).
Thus there is an equivariant birational map $f\colon X\ratmap C\times \bP^1$ with $C$ the smooth completion of $C_0$.
Since $X$ cannot be a ruled surface of genus $q(X) > 1$ by the same argument as in the proof of Lemma~\ref{lemma_log_Iitaka},
we only need to rule out the case that $X$ is a rational surface.

Suppose to the contrary that $X$ is a rational surface.
Then obviously, the curve $C$ can be taken as $\bP^1$.
So we obtain an equivariant birational map $f\colon X\ratmap Y\coloneqq\bP^1\times \bP^1$.
Note that $\bar{\kappa}(U)\ge \bar{\kappa}(V) = 0$ and hence $C_0 = \bG_m$ or $\bP^1\setminus \{x_1,\ldots,x_t\}$ with $t\ge 3$.

\textsc{Case $1$:} $C_0 = \bG_m$.
Take an equivariant resolution $\pi\colon \widetilde{X}\to X$ of the indeterminacy points of $f$ and $\Sing(X\setminus U)$
such that $\varphi\coloneqq f\circ \pi\colon \widetilde{X}\to Y$ is an equivariant birational morphism
and $\pi^{-1}(X\setminus U)$ is a simple normal crossing divisor.
Since our logarithmic invariants $\bar\kappa$, $\bar{p}_g$ and $\bar q$ are independent of the choice of the log smooth completion (cf.~\cite[\S 11]{Iitaka82}), it follows that
$$\bar{\kappa}(\widetilde{V}) = \bar{\kappa}(V) = 0,\ \bar{p}_g(\widetilde{V}) = \bar{p}_g(V) = 0 \text{ and } \bar{q}(\widetilde{V}) = \bar{q}(V)\le 2,$$
where $\widetilde{V}\coloneqq \widetilde{X}\setminus \widetilde{D}$ with $\widetilde{D}\coloneqq \pi^{-1}(D)$.
Let $\widetilde{B}\coloneqq \pi^{-1}(X\setminus U) \setminus \widetilde{D}$.
We consider the new pair $(\widetilde{X},\widetilde{D}+\widetilde{B})$.
It is easy to see that $\widetilde{X}\setminus (\widetilde{D}+\widetilde{B}) = \pi^{-1}(U) \isom U \isom \bG_m\times \bG_m$ under the birational morphism $\varphi$.
We can take a reduced effective divisor $D_Y\coloneqq 2 \text{ sections} + 2 \text{ fibres}$ on $Y$,
such that $\varphi^{-1}(D_Y) = \widetilde{D}+\widetilde{B}$ since the restriction of $\varphi$ to $\widetilde{X}\setminus (\widetilde{D}+\widetilde{B})$ is an isomorphism.

We have two possibilities according to the dimension of $\varphi(\widetilde{B})$.
Suppose that $\dim \varphi(\widetilde{B}) = 0$, i.e., $\widetilde{B}$ is $\varphi$-exceptional.
Note that $\varphi$ is the composition of blowups of (fixed) points.
Then by Lemma \ref{lemma_q_bar}, we have $\bar{q}(\widetilde{V}) = \bar{q}(Y\setminus D_Y) = 2$.
So by \cite[Corollary~29]{Kawamata81},
the quasi-Albanese morphism $\alpha_{\widetilde{V}}\colon \widetilde{V}\to \cA_{\widetilde{V}}$ is birational,
and hence $\bar{p}_g(\widetilde{V}) = 1$ since $\bar{p}_g$ is a birational invariant for smooth varieties.
This contradicts our assumption.
Suppose that $\dim \varphi(\widetilde{B}) = 1$.
Then $\varphi(\widetilde{D})$ is a proper subset of $D_Y$ with some fibre or section taken away so that $\bar{\kappa}(Y\setminus \varphi(\widetilde{D})) = -\infty$.
In this case, by the logarithmic ramification formula we have
$$K_{\widetilde{X}} + \varphi^{-1}(\varphi(\widetilde{D})) = \varphi^*(K_Y+\varphi(\widetilde{D})) + E,$$
where $E$ is an effective $\varphi$-exceptional divisor (cf.~\cite[Theorem 11.5]{Iitaka82}).
It follows that
$$\bar{\kappa}(\widetilde{X}\setminus \varphi^{-1}(\varphi(\widetilde{D}))) = \bar{\kappa}(Y\setminus \varphi(\widetilde{D})) = -\infty.$$
Also note that $\widetilde{D}\le \varphi^{-1}(\varphi(\widetilde{D}))$
and hence $\bar{\kappa}(\widetilde{V}) = \bar{\kappa}(\widetilde{X}\setminus \widetilde{D}) \le \bar{\kappa}(\widetilde{X}\setminus \varphi^{-1}(\varphi(\widetilde{D}))) = -\infty$,
which is a contradiction.

\textsc{Case $2$:} $C_0=\bP^1\setminus \{x_1,\ldots,x_t\}$ with $t\ge 3$.
The proof of this case is quite similar with the first one.
We keep using the notations there but take $D_Y\coloneqq 2 \text{ sections} + t \text{ fibres}$ on $Y$ with respect to the first projection $p_1\colon \bP^1\times \bP^1 \to \bP^1$,
such that $\varphi^{-1}(D_Y) = \widetilde{D}+\widetilde{B}$.
If $\dim \varphi(\widetilde{B}) = 0$, then by Lemma \ref{lemma_q_bar} we have $\bar{q}(\widetilde{V}) = \bar{q}(Y\setminus D_Y) \ge 3$.
This contradicts the fact that $\bar{q}(\widetilde{V})\le 2$.
If $\dim \varphi(\widetilde{B}) = 1$, we also have two possibilities:
\begin{itemize}
  \item $\varphi(\widetilde{B})$ contains a section: we can derive a contradiction like $\bar{\kappa}(\widetilde{V}) = -\infty$ as in Case $1$.
  \item $\varphi(\widetilde{B})$ consists of fibres: according to the number of the fibres, we can get $\bar{q}(\widetilde{V})\ge 3$,
  or $\bar{q}(\widetilde{V}) = 2$ and $\bar{p}_g(\widetilde{V}) = 1$, or $\bar{\kappa}(\widetilde{V}) = -\infty$.
  All of these cases cannot happen under our assumptions.
\end{itemize}

Therefore, we have proved that $X$ cannot be a rational surface and hence this completes the proof of Lemma~\ref{p_g_bar_zero}.
\end{proof}

\begin{proof}[Proof of Theorem \ref{ThmC}]
First it follows directly from Theorem \ref{ThmA} that $G\coloneqq \Aut^0(X,D)$ is a semi-abelian variety of dimension at most $2$.
If $\dim G = 2$, then by Theorem \ref{ThmA} (\ref{ThmA_2}), $K_X + D\sim 0$ and hence $\bar{p}_g(V)=1$ which is impossible.
So we only to consider the case $\dim G = 1$.
If $G$ is complete, then by the Nishi--Matsumura theorem (cf.~\cite{Matsumura63}),
the induced group homomorphism $G\to \Alb(V)$ has a finite kernel.
In particular, we have $\dim G\le \dim \Alb(V) = q(X)$.
The last remaining case is $G = \bG_m$.
By Lemma \ref{p_g_bar_zero}, $X$ is an elliptic ruled surface with $q(X) = 1$ in this case.
So $\dim G = 1 = q(X)$.
\end{proof}

\subsection{Proof of Theorem \ref{ThmD}}

This follows immediately from Theorems \ref{ThmB} and \ref{ThmC}.

\vskip 1em
\phantomsection
\addcontentsline{toc}{section}{Acknowledgments}
\noindent
\textbf{Acknowledgments. }
The author would like to thank his supervisor Professor De-Qi Zhang for suggesting this problem and many inspiring discussions.
The author also thanks the referee for numerous valuable comments and corrections on an earlier version of this paper. 
The author is supported by a research assistantship of the National University of Singapore.

\end{document}